\setlist[enumerate,1]{label={\roman*)}}
\tikzset{> =stealth}
\newcommand{\addQEDstyle}[2]{\AtBeginEnvironment{#1}{\pushQED{\qed}\renewcommand{\qedsymbol}{#2}}\AtEndEnvironment{#1}{\popQED}}
\theoremstyle{plain}
\newtheorem{theorem}{Theorem}[section]
\newtheorem{proposition}[theorem]{Proposition}
\newtheorem{corollary}[theorem]{Corollary}
\theoremstyle{definition}
\newtheorem{definition}[theorem]{Definition}
\theoremstyle{remark}
\newtheorem{remark}[theorem]{Remark}
\newtheorem{example}[theorem]{Example}
\renewcommand{\epsilon}{\varepsilon}
\renewcommand{\phi}{\varphi}
\mathchardef\mhyphen="2D %
\newcommand{\N}{\mathbb{N}}
\newcommand{\Z}{\mathbb{Z}}
\newcommand{\Q}{\mathbb{Q}}
\newcommand{\R}{\mathbb{R}}
\newcommand{\T}{\mathbb{T}}
\newcommand{\id}{\mathrm{id}}
\newcommand{\Set}{\mathbf{Set}}
\newcommand{\Pos}{\mathbf{Pos}}
\newcommand{\Sup}{\mathbf{Sup}}
\newcommand{\Frm}{\mathbf{Frm}}
\newcommand{\Loc}{\mathbf{Loc}}
\newcommand{\Dcpo}{\mathbf{DCPO}}
\newcommand{\PreFrm}{\mathbf{PreFrm}}
\newcommand*{\dirsup}[1][\hspace{0.75ex}]{\operatorname*{\bigvee{}^{\!\uparrow}_{\hspace{-0.2ex} #1}}}
\renewcommand{\O}{\mathcal{O}}
\newcommand{\llround}{\ensuremath{(\!(}}
\newcommand{\rrround}{\ensuremath{)\!)}}
\title{Presenting quotient locales}
\author[G. Manuell]{Graham Manuell}
\address{CMUC, Department of Mathematics, University of Coimbra, Coimbra, Portugal}
\email{graham@manuell.me}
\thanks{The author acknowledges financial support from the Centre for Mathematics of the University of Coimbra (UIDB/00324/2020, funded by the Portuguese Government through FCT/MCTES)}
\date{July 2023}
\subjclass[2010]{06D22, 54B15}
\keywords{perfect map, lax proper map, semi-open map, subframe, classifying locale}
\begin{document}

\begin{abstract}
 It is often useful to be able to deal with locales in terms of presentations of their underlying frames, or equivalently, the geometric theories which they classify.
 Given a presentation for a locale, presentations for its sublocales can be obtained by simply appending additional relations, but the case of quotient locales is more subtle.
 We provide simple procedures for obtaining presentations of open quotients, proper quotients or general triquotients from presentations of the parent locale.
 The results are proved with the help of the suplattice, preframe and dcpo coverage theorems and applied to obtain presentations of the circle from ones for $\R$ and $[0,1]$.
\end{abstract}

\maketitle
\thispagestyle{empty}

\setcounter{section}{-1}
\section{Introduction}

An advantage of the pointfree approach to topology is the ability to present frames by generators and relations.
This can also be interpreted as describing a locale by giving axioms for a geometric theory which it classifies.

Since sublocales correspond to quotient frames, presentations for them the be found by simply adding additional relations to the a presentation for the parent frame.
However, there is no such simple relationship between presentations of a frame and presentations of its subframes, and so quotient locales can be more difficult to deal with with this approach.

The aim of this paper is to give simple procedures for turning a presentation of a locale into a presentation of one of its quotients in a number of important cases. These include open quotients, proper quotients and triquotients, as well as certain `lax' variants of these.

While the results are straightforward applications of the coverage theorems stated below, I have not seen these results mentioned before and I hope that this paper will be a useful reference for dealing with quotients. The last section provides a number of examples to demonstrate the how the results would be used in practice and to showcase their utility.

\section{Background}

We denote the category of frames by $\Frm$ and its opposite category --- the category locales --- by $\Loc$.
To avoid confusion we will distinguish notationally between locales and their corresponding frames: we write $\O X$ for the underlying frame of the locale $X$ and $f^*\colon \O Y \to \O X$ for the frame homomorphism corresponding to the locale map $f\colon X \to Y$.

The category $\Frm$, and thus $\Loc$, has a natural order-enrichment given by the usual pointwise ordering on frame homomorphisms.
Thus it makes sense to consider not only locale coequalisers, but also \emph{coinserters}.
The coinserter of a parallel pair $f,g\colon A \rightrightarrows B$ is the initial map $q\colon B \to C$ such that $qg \le qf$. In this paper we use the convention where the lower path in the diagram is the smaller composite.

A \emph{dcpo} is a poset admitting joins of directed subsets. To emphasise that a join $\bigvee S$ is of a directed subset we we will use the notation $\dirsup S$. The usual morphisms between dcpos preserve these directed joins and are called Scott-continuous functions.
The forgetful functor from $\Frm$ to $\Dcpo$ factors through both the category of suplattices $\Sup$ and the category of preframes $\PreFrm$.
Recall that a suplattice is a poset which admits all joins and a suplattice homomorphism is a join-preserving map between these, while a preframe is a dcpo with finite meets and a preframe homomorphism is a Scott-continuous $\wedge$-semilattice homomorphism.

Of course, frames and suplattices can be presented by generators and relations, but so can preframes (see \cite{johnstone1991preframe}) and dcpos (see \cite{VickersTownsendDoublePowerlocale}).
We write $\langle G \mid R\rangle_{\mathcal{A}}$ for the object of the category $\mathcal{A}$ presented by the generators from $G$ and the relations from $R$ (which we view as a set of formal equalities and inequalities between elements of the free structure on $G$).
For case the $\mathcal{A} = \Frm$ we will omit the subscript when there is no chance of confusion.

Sometimes it is useful to take the generators to have more structure than a mere set. For instance, if $G$ has the structure of a $\wedge$-semilattice, then we can use the free frame on the $\wedge$-semilattice $G$ to form the presentation. This is equivalent to taking the free frame on the set $G$ and adding in relations to force the finite meets in the presented frame to agree with the finite meets in $G$. To indicate we are viewing $G$ as a $\wedge$-semilattice and not just a set we will write such a presentation as $\langle G \text{ $\wedge$-semilattice} \mid R\rangle$.

We now recall some theorems relating different kinds of presentations which will be crucial for proving our results. See \cite{vickers2004double,VickersTownsendDoublePowerlocale} for further details.
\begin{definition}
 Let $G$ be a $\wedge$-semilattice. We will call $\langle G \text{ $\wedge$-semilattice} \mid R\rangle_\Frm$ a \emph{$\Sup$-type frame presentation}
 if every relation in $R$ is of the form $\bigvee A \le \bigvee B$ and furthermore, when $\bigvee A \le \bigvee B$ is a relation, then so is $\bigvee_{a \in A} a \wedge c \le \bigvee_{b \in B} b \wedge c$ for each $c \in G$.
\end{definition}
\begin{theorem}[Suplattice coverage theorem \cite{Abramsky1993quantales}]\label{thm:sup_coverage}
 For a $\Sup$-type frame presentation given by $G$ and $R$, there is an order isomorphism \[\langle G \text{ $\wedge$-semilattice} \mid R\rangle_\Frm \cong \langle G \text{ poset} \mid R\rangle_\Sup.\]
\end{theorem}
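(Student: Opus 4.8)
The plan is to equip the presented suplattice $M := \langle G \text{ poset} \mid R\rangle_\Sup$ with a frame structure, show it is still presented by $G$ and $R$ but now in $\Frm$, and conclude by matching universal properties. Write $\iota\colon G \to M$ for the universal monotone map and $j\colon G \to F$ for the corresponding map into $F := \langle G \text{ $\wedge$-semilattice} \mid R\rangle_\Frm$. Note that $M$ is generated as a suplattice by $\iota(G)$, so every element is of the form $\bigvee\iota(A)$ for some $A\subseteq G$; likewise $F$, being a quotient of the free frame on the $\wedge$-semilattice $G$ (in which every element is a join of generators since $G$ is already closed under finite meets), is generated as a suplattice by $j(G)$.

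The first task is to build a binary meet on $M$. Fix a generator $c\in G$: the monotone map $g\mapsto\iota(g\wedge c)$ sends each relation $\bigvee A\le\bigvee B$ of $R$ to an inequality that holds in $M$, and this is exactly the point at which $\Sup$-typeness is used, since it guarantees that $\bigvee_{a\in A} a\wedge c\le\bigvee_{b\in B} b\wedge c$ is again a relation of $R$. Hence that map factors through $\iota$ as a suplattice endomorphism $\lambda_c\colon M\to M$. A second, more routine, application of the universal property promotes this to a genuine binary operation: for fixed $x\in M$ one checks, writing $x=\bigvee\iota(S)$ and using that each $\lambda_s$ preserves joins, that $g\mapsto\lambda_g(x)$ is monotone and satisfies the relations of $R$, so it extends to a suplattice endomorphism $\bar{\mu}_x$ of $M$; one then sets $x\wedge y:=\bar{\mu}_x(y)$.

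Next I would verify that $(M,\wedge,\iota(\top))$ is a frame and that $\iota$ is a $\wedge$-semilattice homomorphism. Commutativity, associativity, idempotency and the unit law hold on generators, where they reduce to the identities of the $\wedge$-semilattice $G$, and they propagate to all of $M$ because every operation involved was constructed so as to preserve arbitrary joins in each variable; the frame law $x\wedge\bigvee_i y_i=\bigvee_i(x\wedge y_i)$ is immediate from $\bar{\mu}_x$ being a suplattice homomorphism. By construction $\iota$ preserves finite meets, and each relation of $R$, read in the frame $M$ via $\iota$, holds.

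Finally, match universal properties. Freeness of $F$ gives a unique frame homomorphism $\phi\colon F\to M$ with $\phi j=\iota$; conversely $j$ is monotone and satisfies $R$ in the suplattice sense, so there is a unique suplattice homomorphism $\psi\colon M\to F$ with $\psi\iota=j$. Then $\psi\phi$ and $\id_F$ are suplattice endomorphisms of $F$ agreeing on $j(G)$, which generates $F$ as a suplattice, so $\psi\phi=\id_F$; symmetrically $\phi\psi=\id_M$. Thus $\phi$ is the required order isomorphism. The only genuine obstacles I anticipate are the well-definedness of $\wedge$ (the step invoking $\Sup$-typeness, together with the bootstrapping from $\lambda_c$ to $\bar{\mu}_x$) and the bookkeeping needed to lift the frame identities from generators to arbitrary elements; the remaining universal-property chase is routine.
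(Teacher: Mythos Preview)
The paper does not prove this theorem: it is stated in the Background section with a citation to Abramsky and Vickers and no argument is given, so there is nothing to compare your approach against on the paper's side.

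That said, your sketch is essentially the standard proof of the suplattice coverage theorem and is sound in outline. The crucial step---using meet-stability of the relations to show that $g\mapsto\iota(g\wedge c)$ respects $R$ and hence extends to a suplattice endomorphism $\lambda_c$---is exactly where the $\Sup$-type hypothesis enters, and you identify this correctly. The bootstrap from $\lambda_c$ to a genuine binary operation also works: for the second extension one must check that $g\mapsto\lambda_g(x)$ respects $R$, and this follows by writing $x=\bigvee_{s\in S}\iota(s)$, commuting joins, and applying meet-stability once more for each $s$. One point you leave implicit is that the operation you build really is the meet for the existing order on $M$ (not merely an abstract semilattice operation): this follows because $\iota(g\wedge h)\le\iota(g)$ by monotonicity of $\iota$, the operation is monotone in each variable (being join-preserving), and idempotence then gives the universal property of the meet. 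The final universal-property chase is routine, as you say.
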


\begin{definition}
 Let $G$ be a $\vee$-semilattice. We will call $\langle G \text{ $\vee$-semilattice} \mid R\rangle_\Frm$ a \emph{$\PreFrm$-type frame presentation}
 if every relation in $R$ is of the form $\dirsup[\alpha] \bigwedge A_\alpha \le \dirsup[\beta] \bigwedge B_\beta$ where each $A_\alpha$ and $B_\beta$ is finite, and if furthermore, when $\dirsup[\alpha] \bigwedge A_\alpha \le \dirsup[\beta] \bigwedge B_\beta$ is a relation, then so is $\dirsup[\alpha] \bigwedge_{a \in A_\alpha} (a \vee c) \le \dirsup[\beta] \bigwedge_{b \in B_\beta} (b \vee c)$ for each $c \in G$.
\end{definition}
\begin{theorem}[Preframe coverage theorem \cite{johnstone1991preframe}]\label{thm:prefrm_coverage}
 For a $\PreFrm$-type frame presentation given by $G$ and $R$, there is an order isomorphism \[\langle G \text{ $\vee$-semilattice} \mid R\rangle_\Frm \cong \langle G \text{ poset} \mid R\rangle_\PreFrm.\]
\end{theorem}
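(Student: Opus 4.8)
The plan is to equip the preframe $L := \langle G \text{ poset} \mid R\rangle_\PreFrm$ with a frame structure and to verify that, together with the canonical monotone map $\eta\colon G \to L$, it satisfies the universal property of $\langle G \text{ $\vee$-semilattice} \mid R\rangle_\Frm$; the resulting isomorphism of frames is in particular an order isomorphism. Recall that frame homomorphisms out of $\langle G \text{ $\vee$-semilattice} \mid R\rangle_\Frm$ correspond to finite-join-preserving maps out of $G$ satisfying the relations in $R$, whereas preframe homomorphisms out of $L$ correspond to monotone maps out of $G$ into a preframe that satisfy $R$. Since a frame is a preframe and a finite-join-preserving map is monotone, for each frame $A$ and each finite-join-preserving $h\colon G \to A$ satisfying $R$ the universal property of $L$ supplies a unique preframe homomorphism $\bar h\colon L \to A$ with $\bar h\eta = h$; the work is to produce the frame structure making $\eta$ finite-join-preserving and then to check that such a $\bar h$ is automatically a frame homomorphism.

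First I would record the normal form for preframe presentations: the free preframe on a poset is the ideal completion of the free $\wedge$-semilattice on it, and the canonical homomorphism from the free preframe on $G$ onto $L$ is surjective, so every element of $L$ is a directed join $\dirsup[\alpha] \bigwedge A_\alpha$ with each $A_\alpha \subseteq \eta[G]$ finite. I would then define finite joins on $L$ by the formulas that frame distributivity forces: $\eta(g) \vee \eta(h) := \eta(g \vee_G h)$ on generators (and the least element of $L$ for the empty join), $\bigwedge A \vee \bigwedge B := \bigwedge_{a \in A,\, b \in B}(a \vee b)$ on finite meets of generators, and $\dirsup[\alpha] x_\alpha \vee \dirsup[\beta] y_\beta := \dirsup[\alpha,\beta](x_\alpha \vee y_\beta)$ on directed joins. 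One must check that this is well defined, that the resulting operations are joins for the preframe order on $L$, and that $L$ thereby becomes a frame --- the only distributive law not already available in the preframe $L$ is that binary meet distributes over binary join, which holds by construction, and this together with Scott-continuity of binary meet yields full frame distributivity.

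The well-definedness of $\vee$ is the heart of the argument, and is precisely where the stability clause in the definition of a $\PreFrm$-type presentation is used: the relations of the form $\dirsup[\alpha]\bigwedge A_\alpha \le \dirsup[\beta]\bigwedge B_\beta$ whose translate under $(-\vee c)$ remains a consequence of $R$ for every $c \in G$ form a class closed under the deduction rules for preframes, so, $R$ being contained in this class, every consequence of $R$ lies in it and changing the normal-form representative of an element of $L$ does not alter the value of $(-\vee c)$. Alternatively, and perhaps more transparently, one realises $L$ via the preframe coverage generated by $R$ on the free $\wedge$-semilattice on $G$, whereupon the finite joins induced by the $\vee$-semilattice structure of $G$ pass to the quotient manifestly. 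Granting this, $\eta$ preserves finite joins by construction and satisfies $R$ because $L$ is a preframe in which the relations of $R$ hold; and for $h$ and $\bar h$ as above, $\bar h$ preserves directed joins and finite meets (being a preframe homomorphism) and preserves binary joins of generators (since $h$ does), hence preserves all finite joins by the normal form and the distributive formulas (using distributivity in $A$), hence preserves arbitrary joins and is a frame homomorphism. It is moreover the only frame homomorphism extending $h$, since already as a preframe homomorphism it is determined by its values on $\eta[G]$. I expect the only genuine obstacle to be the bookkeeping in this well-definedness step; the remainder is a matter of unwinding the two universal properties.
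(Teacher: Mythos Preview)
The paper does not give a proof of this theorem; it is stated as background with a citation to \cite{johnstone1991preframe} and then used as a tool, so there is no argument in the paper to compare against.

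Your outline is the standard route to the preframe coverage theorem and is essentially correct. The cleanest way to organise the well-definedness step you flag is to work one variable at a time: for each fixed $c \in G$ the map $G \to L$, $g \mapsto \eta(g \vee_G c)$, is monotone and, precisely by the join-stability clause in the definition of a $\PreFrm$-type presentation, sends each relation in $R$ to one that holds in $L$; the universal property of $L$ then yields a preframe endomorphism $(- \vee c)$ of $L$, and a second application of the same idea (in the variable $c$) produces the binary operation on all of $L$. One small point you leave implicit is that a general preframe need not have a least element, so ``the least element of $L$'' for the empty join requires justification: here $G$ is a $\vee$-semilattice with bottom $0_G$, and $\eta(0_G)$ lies below every generator by monotonicity of $\eta$ and hence below every element of $L$ by your normal form, so it does serve as the bottom. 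With these points handled, the remainder of your argument (that $\bar h$ automatically preserves finite joins via the normal form, and is therefore a frame homomorphism) goes through.
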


\begin{definition}
 Let $G$ be a (bounded) distributive lattice. We will call a presentation $\langle G \text{ dist.\ lattice} \mid R\rangle_\Frm$ a \emph{$\Dcpo$-type frame presentation}
 if every relation in $R$ is of the form $\dirsup A \le \dirsup B$ and furthermore, when $\dirsup A \le \dirsup B$ is a relation, then so are $\dirsup[a \in A] a \wedge c \le \dirsup[b \in B] b \wedge c$ and $\dirsup[a \in A] (a \vee c) \le \dirsup[b \in B] (b \vee c)$ for each $c \in G$.
\end{definition}
\begin{theorem}[Dcpo coverage theorem \cite{VickersTownsendDoublePowerlocale}]\label{thm:dcpo_coverage}
 For a $\Dcpo$-type frame presentation given by $G$ and $R$, there is an order isomorphism \[\langle G \text{ dist.\ lattice} \mid R\rangle_\Frm \cong \langle G \text{ poset} \mid R\rangle_\Dcpo.\]
\end{theorem}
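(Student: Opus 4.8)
The plan is to reduce the statement to the suplattice coverage theorem (\cref{thm:sup_coverage}). Begin with a routine reformulation: the presentation $\langle G\text{ dist.\ lattice}\mid R\rangle_\Frm$ is literally the same as $\langle G\text{ $\wedge$-semilattice}\mid R\cup R_\vee\rangle_\Frm$, where $R_\vee$ is the set of relations $\overline{x\vee y}=\bar x\vee\bar y$ (for $x,y\in G$) and $\overline{\bot_G}=\bigvee\varnothing$ that force the binary joins and the bottom of $G$ to be respected, $\bar x$ denoting $x$ viewed as a generator. Because $G$ is distributive, the $\wedge c$-relatives of the relations in $R_\vee$ are again in $R_\vee$, while the $\wedge c$-relatives of the relations in $R$ lie in $R$ by hypothesis; hence $R\cup R_\vee$ is a $\Sup$-type presentation, and \cref{thm:sup_coverage} provides an order isomorphism $\langle G\text{ dist.\ lattice}\mid R\rangle_\Frm\cong\langle G\text{ poset}\mid R\cup R_\vee\rangle_\Sup$. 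Imposing $R_\vee$ on the free suplattice on the poset $G$ (its downset lattice $\D(G)$) presents the free suplattice on the $\vee$-semilattice $G$, which is the ideal completion $\I(G)$; so the right-hand side above is the suplattice quotient of $\I(G)$ by $R$. On the other side, $\langle G\text{ poset}\mid R\rangle_\Dcpo$ is the dcpo quotient of the free dcpo on the poset $G$, and the free dcpo on a poset is again its ideal completion (a monotone map $f$ into a dcpo extends uniquely by $I\mapsto\dirsup[g\in I]f(g)$). Since each relation of $R$, realised in $\I(G)$, involves the same elements whether $\dirsup A$ is read as a join of a subset in a suplattice or as a directed join in a dcpo (a directed join being in particular a join), the whole statement reduces to the claim that the suplattice congruence $\theta_{\Sup}$ and the dcpo congruence $\theta_{\Dcpo}$ on $\I(G)$ generated by $R$ coincide.

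One inclusion is free: suplattice homomorphisms preserve directed joins, so every suplattice congruence is a dcpo congruence, and $\theta_{\Dcpo}\subseteq\theta_{\Sup}$. For the reverse it is enough to equip $\I(G)/\theta_{\Dcpo}$ with a suplattice structure for which the quotient map is a suplattice homomorphism inverting $R$, since minimality of $\theta_{\Sup}$ then forces $\theta_{\Sup}\subseteq\theta_{\Dcpo}$. That quotient already has directed joins and a bottom, and arbitrary joins are directed joins of finite joins, so the only thing to check is that binary joins descend, i.e.\ that $U\mathrel{\theta_{\Dcpo}}U'$ implies $U\vee V\mathrel{\theta_{\Dcpo}}U'\vee V$ for every ideal $V$. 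This is exactly where the $\vee c$-stability clause in the definition of a $\Dcpo$-type presentation is used: for a principal ideal $V={\downarrow}c$, joining the ideals attached to a relation $\dirsup A\le\dirsup B$ with ${\downarrow}c$ produces the ideals attached to its $\vee c$-relative $\dirsup[a\in A](a\vee c)\le\dirsup[b\in B](b\vee c)$, which again lies in $R$; a general $V$ is a directed join of principal ideals, and one descends to it using that $\vee$ distributes over directed joins in $\I(G)$ and that $\theta_{\Dcpo}$, being a sub-dcpo of $\I(G)\times\I(G)$, is closed under directed joins.

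The step I would expect to be most delicate is making this precise: one must pin down the appropriate notion of dcpo congruence (a sub-dcpo of the square containing the diagonal and the generating pairs), check that $\{(U,U'):U\vee V\mathrel{\theta_{\Dcpo}}U'\vee V\text{ for all }V\}$ is again such a congruence, and verify it contains the generating pairs; this runs parallel to the corresponding steps in the proofs of \cref{thm:sup_coverage,thm:prefrm_coverage}. Assembling the isomorphisms then yields $\langle G\text{ dist.\ lattice}\mid R\rangle_\Frm\cong\langle G\text{ poset}\mid R\rangle_\Dcpo$ as posets. (Symmetrically, one could reduce instead to the preframe coverage theorem by encoding the meets of $G$ as $\PreFrm$-type relations, in which case it is the $\wedge c$-stability clause that carries the argument — which is why the definition of a $\Dcpo$-type presentation includes both clauses.)
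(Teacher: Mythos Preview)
The paper does not prove this theorem at all: it is quoted as a background result, attributed to \cite{VickersTownsendDoublePowerlocale}, and immediately followed by a remark with no intervening proof. There is therefore no ``paper's own proof'' to compare your proposal against.

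For what it is worth, your outline is a sensible reduction and is broadly in the spirit of how these coverage theorems relate to one another: encoding the finite joins of $G$ as extra $\Sup$-type relations, applying \cref{thm:sup_coverage}, and then identifying both sides with quotients of the ideal completion $\I(G)$ is exactly the kind of argument one would expect. You are right to single out the dcpo-congruence step as the delicate one. Dcpo quotients are genuinely less well behaved than suplattice or preframe quotients (the category $\Dcpo$ is not algebraic, and the usual ``congruence = kernel pair'' intuition can fail), so the assertion that $\{(U,U'): U\vee V \mathrel{\theta_{\Dcpo}} U'\vee V \text{ for all } V\}$ is again a dcpo congruence containing the generators, and that this suffices to conclude $\theta_{\Sup}\subseteq\theta_{\Dcpo}$, needs the machinery of dcpo presentations developed in the cited reference rather than a purely formal congruence argument. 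If you intend this as a self-contained proof rather than a sketch, that is the place to invest the effort; otherwise your proposal reads as a faithful summary of the expected line of argument.
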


\begin{remark}
 The generators in a $\Sup$-type presentation form a \emph{base} of opens for the topology in question. In a $\PreFrm$-type presentation, it is perhaps most intuitive to think of the elements of the frame, not as being \emph{opens}, but as \emph{closed} sublocales with the reverse order. The generators then give a set of `basic' closed sublocales which are closed under finite meets and generate all the closed sublocales under finite joins and downward-directed meets. For $\Dcpo$-type presentations either viewpoint is appropriate.
\end{remark}

\begin{remark}
The restriction to $\Sup$-type, $\PreFrm$-type or $\Dcpo$-type presentations does not really constrain us, since it is easy to turn any presentation into one of the appropriate form by simply closing the generators under meets or joins as appropriate, manipulating the relations into the appropriate form, and adding any additional relations needed for meet- or join-stability.
\end{remark}

Finally, let us consider the various kinds of maps we will encounter in this paper.
A locale map $f\colon X \to Y$ is said to be \emph{open} if $f^*\colon \O Y \to \O X$ has a left adjoint $f_!\colon \O X \to \O Y$ satisfying the so-called Frobenius condition \[f_!(a \wedge f^*(b)) = f_!(a) \wedge b.\]
This is equivalent to the map $f^*$ being a complete Heyting algebra homomorphism. Note that as a left adjoint $f_!$ preserves joins and is thus a suplattice homomorphism.
We say $f$ is an \emph{open quotient} if $f$ is open and epic --- or equivalently, if $f$ is open and $f^*$ is injective, in which case $f_!$ is a left inverse of $f^*$ in $\Sup$.

A locale map $f\colon X \to Y$ such that $f^*$ has a left adjoint that does not necessarily satisfy the Frobenius condition is called \emph{semi-open}.
We will call epic semi-open maps \emph{semi-open quotients}. These are arguably not true quotient maps, since they need not be regular epimorphisms.
However, they do appear as coinserters, and since many of our results will not need the Frobenius conditions, they are a natural class of maps to condition in this setting.

A map $f\colon X \to Y$ is called \emph{proper} if the right adjoint $f_*$ of $f^*$ is Scott-continuous and satisfies the Frobenius condition \[f_*(a \vee f^*(b)) = f_*(a) \vee b.\]
Note that since right adjoints preserve meets, if $f$ is proper then $f_*$ is a preframe homomorphism.
An epic proper map is called a \emph{proper quotient}. In this case $f_*$ is a left inverse to $f^*$ in $\PreFrm$.
Similarly to above a map $f$ is said to be \emph{semi-proper} (or sometimes \emph{lax proper} or \emph{perfect}) if $f_*$ preserves directed joins, but does not necessary satisfy the Frobenius condition. It will be useful to call epic semi-proper maps \emph{semi-proper quotients}.

A locale map $f\colon X \to Y$ is said to be a \emph{triquotient surjection} if there exists a dcpo morphism $f_\#\colon \O X \to \O Y$, called a \emph{triquotiency assignment},
satisfying $f_\#(a \wedge f^*(b)) = f_\#(a) \wedge b$ and $f_\#(a \vee f^*(b)) = f_\#(a) \vee b$. These generalise open and proper quotients (taking $f_!$ and $f_*$ as the triquotiency assignments respectively). The map $f_\#$ is automatically a left inverse of $f^*$ in $\Dcpo$. See \cite{plewe1997localic} for details.

\section{Working with locale quotients}

If $q\colon X \twoheadrightarrow Y$ is a semi-open quotient, then the composite of $q^*q_!$ gives a suplattice endomorphism on $\O X$.
In fact, this is a closure operator and $\O Y$ is isomorphic to its fixed points. Moreover, the fixed points of any join-preserving closure operator on $\O X$ form a frame (since they are closed under all meets and joins) and give rise to a semi-open quotient. Thus, these will be a useful way for us to specify the quotient locale $Y$ without already knowing the precise form of $Y$.
The map $q$ will be \emph{open} if and only if its corresponding closure operator $j$ satisfies $j(a) \wedge j(b) \le j(a \wedge j(b))$.

If $q\colon X \twoheadrightarrow Y$ is a semi-proper quotient, then $q^*q_*$ similarly gives a preframe endomorphism and interior operator on $\O X$. As before, $\O Y$ is isomorphic to its fixed points and the fixed points of any such operator form a frame and give a semi-proper quotient. The quotient is proper if and only if the interior operator satisfies $p(a) \vee p(b) \ge p(a \vee p(b))$.

It is shown in \cite{kock1989godement} that if $p_1,p_2\colon R \rightrightarrows X$ are open morphisms and are the domain and codomain maps of a localic groupoid (or in particular, an equivalence relation), then their coequaliser $q\colon X \twoheadrightarrow Y$ is an open quotient and $\O Y$ is given by the fixed points of the closure operator $(p_1)_!p_2^*$ (or $(p_2)_!p_1^*$). Furthermore, even if only the upper (domain) map $p_1$ map is open and they form a localic category (or in particular, a preorder), the coinserter is a semi-open quotient and the corresponding closure operator is still $(p_1)_!p_2^*$. 
Dual results with proper maps replacing open maps are given in \cite{vermeulen1994proper,korostenski2007lax}. (There it is only stated for internal equivalence relations and preorders, but the proofs work equally well for localic groupoids and categories.) This time the \emph{lower} map $p_2$ should be proper and $\O Y$ is given by the fixed points of the interior operator $(p_2)_*p_1^*$.

In fact, we can obtain similar results in a more general situation. Suppose the following diagram is a coinserter in $\Loc$.
\begin{center}
\begin{tikzpicture}
 \node (A) {$R$};
 \node [right=0.6cm of A] (B) {$X$};
 \node [right=0.6cm of B] (C) {$Y$};
 \draw[transform canvas={yshift=0.5ex},->] (A) to node [above] {$f$} (B);
 \draw[transform canvas={yshift=-0.5ex},->] (A) to node [below] {$g$} (B);
 \draw[->>] (B) to node [above] {$q$} (C);
\end{tikzpicture}
\end{center}
Since the forgetful functor from $\Frm$ to $\Pos$ creates weighted limits, $\O Y$ can be identified with $\{ u \in \O X \mid g^*(u) \le f^*(u) \}$.
If $f$ is semi-open, then $g^*(u) \le f^*(u) \iff f_!g^*(u) \le u$ and so $\O Y$ consists of the \emph{pre-fixed points} of the suplattice endomorphism $f_!g^*$ on $\O X$.
Similarly, if $g$ is semi-proper, then $\O Y$ consists of the \emph{post-fixed points} of the preframe endomorphism $g_*f^*$.

\begin{remark}
 If the above is a \emph{reflexive} coinserter (with common section $r$), then $g_*f^* = r^*g^*g_*f^* \le r^*f^* = \id$ and so $g_*f^*$ is automatically deflationary. Similarly, if it exists,
 $f_!g^*$ is automatically inflationary. So in this case the coinserter is given by the \emph{fixed points} of $f_!g^*$ or $g^*f_*$, not just the pre-fixed points or post-fixed points.
\end{remark}

\begin{proposition}\label{prop:open_quotient_endomorphism}
If $j$ is a suplattice endomorphism on a frame $\O X$, then the pre-fixed points of $j$ form a subframe $\O Y$ of $\O X$. The frame inclusion $q^*\colon \O Y \hookrightarrow \O Y$ has a left adjoint and thus corresponds to a semi-open locale quotient. The associated closure operator is given by $\bigvee_{n = 0}^\infty j^n$.

Consequently, given a coinserter diagram as above, if $f$ is semi-open then so is $q$.
\end{proposition}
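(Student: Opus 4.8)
The plan is to exhibit $\O Y$ as the set of fixed points of an explicit join-preserving closure operator on $\O X$, and then to use the standard fact that such fixed points form a subframe with the corestricted operator as left adjoint to the inclusion.

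First I would set $k \coloneqq \bigvee_{n=0}^{\infty} j^n$ (with $j^0 = \id$), a join which exists because $\O X$ is a suplattice. Each $j^n$ is join-preserving (a composite of join-preserving maps), and a pointwise join of join-preserving maps is again join-preserving since joins may be interchanged in a suplattice; hence $k$ is a suplattice endomorphism of $\O X$, in particular monotone, and it is inflationary because $k \ge j^0 = \id$. For idempotency one uses join-preservation of $k$ to compute $k(k(x)) = \bigvee_m k(j^m(x)) = \bigvee_{m,n \ge 0} j^{n+m}(x) = \bigvee_{N \ge 0} j^N(x) = k(x)$. So $k$ is a join-preserving closure operator.

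Next I would check that the fixed points of $k$ are exactly the pre-fixed points of $j$. If $j(u) \le u$, then $j^n(u) \le u$ for all $n$ by an easy induction using monotonicity of $j$, so $k(u) = \bigvee_n j^n(u) \le u$, and with $k(u) \ge u$ this forces $k(u) = u$; conversely $k(u) = u$ gives $j(u) \le \bigvee_n j^n(u) = u$. Writing $\O Y$ for this common set, it is closed under arbitrary meets taken in $\O X$ (from $\bigwedge u_i \le k(\bigwedge u_i) \le \bigwedge_i k(u_i) = \bigwedge u_i$) and, crucially using that $k$ preserves joins, under arbitrary joins taken in $\O X$ (from $k(\bigvee u_i) = \bigvee k(u_i) = \bigvee u_i$). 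Hence the inclusion $q^* \colon \O Y \hookrightarrow \O X$ preserves finite meets and all joins, so $\O Y$ is a subframe, and $q^*$ is injective so $q$ is epic. The corestriction $q_! \coloneqq k \colon \O X \to \O Y$ is well-defined (as $k(x)$ is $k$-fixed) and is left adjoint to $q^*$, since for $x \in \O X$, $u \in \O Y$ we have $k(x) \le u \implies x \le k(x) \le u$ and $x \le u \implies k(x) \le k(u) = u$. Therefore $q^*$ has a left adjoint, $q$ is a semi-open quotient, and the associated closure operator $q^* q_!$ is $k = \bigvee_{n=0}^\infty j^n$.

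For the concluding sentence, given the coinserter diagram with $f$ semi-open, the discussion preceding the proposition identifies $\O Y$ with $\{\,u \in \O X \mid g^*(u) \le f^*(u)\,\} = \{\,u \in \O X \mid f_!g^*(u) \le u\,\}$, i.e. the pre-fixed points of the suplattice endomorphism $j \coloneqq f_!g^*$ (a composite of the frame homomorphism $g^*$ with the left adjoint $f_!$), and $q^*$ is exactly the inclusion of this set; so the first part of the proposition applies verbatim and shows $q$ is semi-open. The only point requiring genuine care is the join-interchange in the idempotency computation and the closely related observation that closure of $\O Y$ under the joins of $\O X$ really does rely on $j$ being a suplattice (not merely monotone) endomorphism — for an arbitrary closure operator the fixed points need not be closed under the joins of $\O X$.
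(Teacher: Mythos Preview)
Your proof is correct and follows essentially the same approach as the paper: both establish that $k = \bigvee_{n=0}^\infty j^n$ is a join-preserving closure operator whose fixed points coincide with the pre-fixed points of $j$, and then apply this to $j = f_!g^*$ for the coinserter consequence. The paper merely invokes the Kleene fixed point theorem (after first noting that $j \vee \id$ is inflationary with the right fixed points) rather than spelling out the idempotency calculation and the subframe verification as you do, but the content is the same.
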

\begin{proof}
 The map $j \vee \id$ is an inflationary suplattice endomorphism, the fixed points of which coincide with the pre-fixed points of $j$.
 As in the Kleene fixed point theorem, we find the map $\bigvee_{n = 0}^\infty j^n$ is a join-preserving closure operator, which again has the same fixed points.
 This provides a left adjoint to the inclusion $q^*\colon \O Y \hookrightarrow \O X$.
 
 Now consider the coinserter where $f$ is semi-open. The result follows by applying the above to the suplattice endomorphism $f_!g^*$.
\end{proof}

\begin{remark}\label{rem:open_coequaliser_and_symmetry}
 Since $g^*(u) = f^*(u)$ whenever $g^*(u) \le f^*(u)$ and $f^*(u) \le g^*(u)$, it follows that if both $f$ and $g$ are semi-open then their \emph{coequaliser} is given by the pre-fixed points of $f_!g^* \vee g_!f^*$.
 Furthermore, if there is a `symmetry' map $s\colon R \to R$ such that $g = fs$ (as in the case of a localic groupoid) then it can be shown that the coequaliser and coinserter coincide.
\end{remark}
\begin{remark}
 The fact that the coequaliser of semi-open maps is semi-open is also immediate from the fact that equalisers in the category of complete lattices and agree with those in $\Frm$ (since they are both computed as in $\Set$).
 Similarly, we see that the coequaliser of open maps is open by taking the equaliser in the category of complete Heyting algebras.
\end{remark}

The case of proper quotients is somewhat more subtle.
\begin{example}
 Let $N$ be $\N$ equipped with the order topology for the reverse of the usual order (i.e.\ opens are downsets with respect to the usual order)
 and let $s\colon N \to N$ be the successor function. This map is proper.
 Then the coequaliser and the coinserter of $s$ and $\id_N$ (with $\id_N$ the lower map) are both the terminal locale.
 But the unique map ${!}\colon N \to 1$ is proper if and only if it is semi-proper if and only if $N$ is compact and it is clear that $N$ is not compact.
 Thus, a coequaliser or coinserter of proper maps need not even be semi-proper.
\end{example}

The problem is that we cannot iterate the preframe endomorphism to obtain an idempotent one as we did for the suplattice endomorphism in \cref{prop:open_quotient_endomorphism}.
The following proposition gives conditions under which idempotence is automatic and so we do obtain a semi-proper quotient.

\begin{proposition}\label{prop:proper_transitive_interior_operator}
 Consider a coinserter diagram as above and suppose $g$ is \emph{proper}. Further suppose that there is a locale map $t\colon R\times_X R \to R$ such that $g \pi_2 \le gt$ and $ft \le f \pi_1$ (for example, the transitivity map of an internal preorder or the composition of an internal category).
 Then the coinserter morphism $q$ is semi-proper and $g_*f^* \wedge \id$ is the associated interior operator.
 \begin{center}
  \begin{tikzpicture}[node distance=2.5cm, auto]
    \node (A) {$R \times_X R$};
    \node (B) [below of=A] {$R$};
    \node (B') [below=1.25cm of B] {$X$};
    \node (C) [right of=A] {$R$};
    \node (C') [right=1.25cm of C] {$X$};
    \node (D) [below of=C] {$X$};
    \draw[->] (A) to node [swap] {$\pi_1$} (B);
    \draw[->] (A) to node {$\pi_2$} (C);
    \draw[->] (B) to node [swap] {$g$} (D);
    \draw[->] (C) to node {$f$} (D);
    \draw[->] (B) to node [swap] {$f$} (B');
    \draw[->] (C) to node {$g$} (C');
    
    \begin{scope}[shift=({A})]
        \draw +(0.25,-0.75) -- +(0.75,-0.75) -- +(0.75,-0.25);
    \end{scope}
    
    \node (X) [above left=1.2cm and 1.2cm of A.center] {$R$};
    \draw[out=-105,->] (X) to node [swap] {$f$} node [anchor=center] (F) {} (B');
    \draw[out=15,->] (X) to node {$g$} node [anchor=center] (G) {} (C');
    \draw[<-, pos=0.55] (X) to node {$t$} (A);
    
    \path[pos=0.6] (A) to node [anchor=center] {$\le$} (F);
    \path[pos=0.6] (A) to node [anchor=center] {\rotatebox{90}{$\le$}} (G);
  \end{tikzpicture}
 \end{center}
\end{proposition}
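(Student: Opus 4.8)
The plan is to recognise $q$ as the semi-proper quotient associated --- via the discussion preceding \cref{prop:open_quotient_endomorphism} --- with the operator $p = g_*f^* \wedge \id$ on $\O X$, so the real content is to show that $p$ is a \emph{deflationary idempotent} preframe endomorphism whose fixed points are exactly $\O Y$.

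The easy parts come first. Because $g$ is proper, $g_*$ is a preframe homomorphism; since $f^*$ is also Scott-continuous and binary meet is Scott-continuous in a frame, $p$ is Scott-continuous, and as $g_*$ and $f^*$ preserve finite meets one computes $p(\top) = g_*f^*(\top) \wedge \top = \top$ and $p(a \wedge b) = p(a) \wedge p(b)$. Thus $p$ is a preframe endomorphism, and it is visibly deflationary. Moreover, by the adjunction $g^* \dashv g_*$ we have $\O Y = \{u \mid g^*(u) \le f^*(u)\} = \{u \mid u \le g_*f^*(u)\}$, which is precisely the fixed-point set of $p$, and $p$ restricts to the identity there. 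It therefore remains only to show that $p$ is idempotent. Since $p$ is deflationary and monotone this is equivalent to $p(u) \le p(p(u))$, and expanding $p(p(u)) = g_*f^*g_*f^*(u) \wedge p(u)$ (using again that $g_*$ and $f^*$ preserve finite meets), it suffices to show $g_*f^*(u) \le g_*f^*g_*f^*(u)$ for all $u$, or equivalently --- applying $g^* \dashv g_*$ once more --- that $g^*g_*f^*(u) \le f^*g_*f^*(u)$, i.e.\ that $g_*f^*$ corestricts to a map $\O X \to \O Y$.

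Establishing this corestriction is the heart of the matter, and the only place where $t$ and the full strength of properness of $g$ are used. The square on $R \times_X R,\ R,\ R,\ X$ with edges $\pi_1,\pi_2,f,g$ commutes (it expresses $g\pi_1 = f\pi_2$) and is a pullback, so the Beck--Chevalley condition for proper maps \cite{vermeulen1994proper} gives $f^*g_* = (\pi_2)_*\pi_1^*$. The hypothesis $ft \le f\pi_1$ unwinds to $t^*f^* \le \pi_1^*f^*$ and $g\pi_2 \le gt$ to $\pi_2^*g^* \le t^*g^*$; together with the counit $g^*g_* \le \id$ and the adjunction $\pi_2^* \dashv (\pi_2)_*$ these yield
\[ f^*g_*f^*(u) \;=\; (\pi_2)_*\pi_1^*f^*(u) \;\ge\; (\pi_2)_*t^*f^*(u) \;\ge\; g^*g_*f^*(u), \]
the last inequality because $\pi_2^*\bigl(g^*g_*f^*(u)\bigr) \le t^*g^*g_*f^*(u) \le t^*f^*(u)$. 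I expect the main obstacle to be precisely this calculation: getting every inequality pointed the right way and applying $t^*$, $\pi_1^*$, $(\pi_2)_*$ to the correct arguments; once that is done, everything else is formal adjunction bookkeeping.

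With $p$ now known to be a deflationary idempotent preframe endomorphism of $\O X$ with fixed-point set $\O Y$, the cited discussion applies directly: $\O Y$ is a frame, and the subframe inclusion $q^*\colon \O Y \hookrightarrow \O X$ has Scott-continuous right adjoint $q_*$ sending $u$ to $p(u)$ (Scott-continuity inherited from $p$, since $\O Y$ is closed under directed joins in $\O X$). Hence $q$ is semi-proper, and its associated interior operator is $q^*q_* = p = g_*f^* \wedge \id$, as claimed.
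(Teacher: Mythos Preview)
Your proof is correct and follows essentially the same approach as the paper: both establish idempotence of $g_*f^*\wedge\id$ by proving $g_*f^* \le g_*f^*g_*f^*$ via the Beck--Chevalley identity $f^*g_*=(\pi_2)_*\pi_1^*$, the inequalities $t^*f^*\le\pi_1^*f^*$ and $\pi_2^*g^*\le t^*g^*$ coming from the hypotheses on $t$, and the counit $g^*g_*\le\id$. The only cosmetic difference is that you first transpose to the equivalent statement $g^*g_*f^*\le f^*g_*f^*$ and use the adjunction $\pi_2^*\dashv(\pi_2)_*$ at the end, whereas the paper keeps $g_*$ in front throughout and invokes the unit $(\pi_2)_*\pi_2^*\ge\id$ explicitly; the underlying computation is the same.
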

\begin{proof}
 Since $g$ is proper, so is the pullback projection $\pi_2$. Moreover, we have the Beck--Chevalley condition: $f^*g_* = (\pi_2)_* \pi_1^*$.
 Thus,
 \begin{align*}
  g_*f^*g_*f^* &= g_* (\pi_2)_* \pi_1^* f^* \\
               &\ge g_* (\pi_2)_* t^* f^* \\
               &\ge g_* (\pi_2)_* t^* g^*g_* f^* \\
               &\ge g_* (\pi_2)_* \pi_2^* g^*g_* f^* \\
               &\ge g_* g^* g_*f^* \\
               &= g_*f^*.
 \end{align*}
 Now $g_*f^* \wedge \id$ is a deflationary preframe endomorphism on $\O X$ and by the above we have $(g_*f^* \wedge \id)^2 = g_*f^*g_*f^* \wedge g_*f^* \wedge \id = g_*f^* \wedge \id$
 and hence $g_*f^* \wedge \id$ is an interior operator.
 Finally, the elements of $\O Y$ are precisely the post-fixed points of $g_*f^*$ and thus the fixed points of $g_*f^* \wedge \id$.
\end{proof}
\begin{remark}
 If $g$ is only semi-proper, then a similar result holds if the domain of $t$ is replaced with the comma object $g/f$.
\end{remark}

\begin{remark}\label{rem:proper_coequaliser_and_symmetry}
 If in the above proposition $f$ and $g$ are both proper, $g\pi_2 = gt$ and $f\pi_1 = ft$, then their \emph{coequaliser} is given by the fixed points of the interior operator $g_*f^* \wedge f_*g^* \wedge \id$.
 Moreover, the resulting quotient map is proper.
 Finally, if there is a symmetry map as in \cref{rem:open_coequaliser_and_symmetry} the coequaliser and coinserter coincide.
\end{remark}

More general than proper and open quotients is the case of triquotients. While there is unfortunately no good result relating these to coequalisers, we can still represent them by idempotent dcpo endomorphisms in analogy to the interior and closure operators discussed above.
In fact, just as proper and open quotients can be generalised to semi-proper and semi-open quotients, we consider general locale maps $q\colon X \twoheadrightarrow Y$ whose corresponding frame map $q^*$ has a Scott-continuous retraction $q_\#$, but without any additional assumptions. These do not seem to have an established name, but one might call them \emph{semi-triquotient} maps or perhaps even \emph{sesqui-quotient} maps.

If $q\colon X \twoheadrightarrow Y$ is a semi-triquotient, the map $q^*q_\#$ is an idempotent dcpo endomorphism on $\O X$, whose poset of fixed points is isomorphic to $\O Y$. This satisfies $q^*q_\#(1) = 1$, $q^*q_\#(0) = 0$, $q^*q_\#(a) \wedge q^*q_\#(b) \le q^*q_\#(q^*q_\#(a) \wedge q^*q_\#(b))$ and $q^*q_\#(a) \vee q^*q_\#(b) \ge q^*q_\#(q^*q_\#(a) \vee q^*q_\#(b))$. Moreover, the fixed points of any dcpo endomorphism $e$ on a frame $\O X$ satisfying these conditions form a subframe of $\O X$ whose inclusion corresponds to a semi-triquotient with dcpo retraction $e$.

An idempotent dcpo endomorphism $e$ corresponds to a \emph{triquotient} if it satisfies the stronger laws $e(a) \wedge e(b) \le e(a \wedge e(b))$ and $e(a) \vee e(b) \ge e(a \vee e(b))$, in addition to $e(1) = 1$ and $e(0) = 0$.

Finally, we note that by general categorical principles in all the cases above the frame of fixed points of the suplattice, preframe or dcpo endomorphism describing the quotient locale can be found not only as a subobject, but also a quotient in the appropriate category. For example, the map $q_\#\colon \O X \twoheadrightarrow \O Y$ is the dcpo coequaliser of $q^*q_\#$ and $\id_{\O X}$. Thus, $\O Y$ can be obtained as a dcpo quotient of $\O X$ by setting $a \sim q^*q_\#(a)$ for each $a$ in $\O X$ (or equivalently, for each $a$ in some subset that generates $\O X$ under directed joins).

Similarly, if $j\colon \O X \to \O X$ is a join-preserving closure operator, the suplattice quotient onto the frame of fixed points is obtained by setting $j(a) \lesssim a$ for each $a$ in some base of $\O X$. If $p\colon \O X \to \O X$ is a preframe endomorphism and an interior operator, the preframe quotient is obtained by setting $a \lesssim p(a)$ for each $a$ in some subset of $\O X$ which generates $\O X$ as a preframe.

\section{Main results}

While presenting quotients of locales is tricky in general, there is one case where it is trivial. This is when the quotient $q\colon X \twoheadrightarrow Y$ has a section $s\colon Y \hookrightarrow X$.
Then $Y$ is a sublocale of $X$ and so we simply need to add the additional corresponding additional relations to the presentation of $X$ in the usual way.

In the cases we will consider, we will not be quite so lucky to have a frame homomorphism which is left inverse to $q^*$, but we will instead have morphisms of suplattices, preframes or dcpos that will play the same role.

\subsection{Presenting open quotients}

We know that a semi-open quotient $q\colon X \twoheadrightarrow Y$ can be specified by a join-preserving closure operator $j\colon \O X \to \O X$
and that the map $q_!\colon \O X \to \O Y$ is a suplattice quotient with kernel congruence generated by $j(a) \lesssim a$.

Now suppose we have a presentation for $\O X$.
Without loss of generality, we may assume that this is a $\Sup$-type presentation $\O X = \langle G \text{ $\wedge$-semilattice} \mid R \rangle_\Frm$.
We can then use the suplattice coverage theorem (\ref{thm:sup_coverage}) to obtain $\O X \cong \langle G \text{ poset} \mid R \rangle_\Sup$.

Adding the relations from the suplattice quotient given by $q_!$ we obtain a suplattice presentation for $\O Y$. 
We find $\O Y \cong \langle G \text{ poset} \mid R,\ j(g) \le g,\, g \in G \rangle_\Sup$.
Note that here the relations $j(g) \le g$ can be understood purely in terms of generators by writing each $j(g)$ as a join of generators in $\O X$.

We still need to turn this into a \emph{frame} presentation of $\O Y$.
In general if a suplattice $L$ happens to be a frame, then $L \cong \langle \{\lozenge a \mid a \in L\} \text{ suplattice} \mid \lozenge 1 = 1,\ \lozenge a \wedge \lozenge b = \lozenge (a \wedge b),\, a, b \in L \rangle_\Frm$, where the meet $a \wedge b$ is taken in $L$. In fact, it suffices to only include (in addition to $\lozenge 1 = 1$) the relations $\lozenge a \wedge \lozenge b = \lozenge (a \wedge b)$ for $a$ and $b$ restricted to some base of $L$, since the remaining relations will follow from these ones by taking joins and using the fact that $\lozenge (-)$ preserves the joins.
In our case generators $a,b$ in $\O X$ map to $q_!(a), q_!(b)$ in the suplattice quotient and so taking the meet in $\O Y$ corresponds to $j(a) \wedge j(b)$ in our presentation.
Thus, combining this with the above suplattice presentation, we have
\begin{align*}
 \O Y \cong \langle \{ \lozenge g \mid  g \in G\} \text{ poset} \mid {}
     & R,\ \lozenge j(g) \le \lozenge g,\, g \in G,\ \lozenge 1 = 1,\\
     & \lozenge s \wedge \lozenge t = \lozenge(j(s) \wedge j(t)),\, s, t \in G \rangle_\Frm,
\end{align*}
where $\lozenge j(g)$ is defined to mean $\bigvee_\alpha \lozenge g_\alpha$ for $j(g) = \bigvee_\alpha g_\alpha$ and similarly, we interpret $\lozenge (j(s) \wedge j(t)) = \bigvee_{\sigma,\tau} \lozenge (g_\sigma \wedge g_\tau)$ for $j(s) = \bigvee_\sigma g_\sigma$ and $j(t) = \bigvee_\tau g_\tau$ (where the meet is taken in the $\wedge$-semilattice $G$).

Finally, note that the relation $\lozenge j(g) \le \lozenge g$ follows from the meet conditions by taking $s = 1$ and $t = g$,
and the inequalities from the poset of generators then also follow from the meet condition by taking $s \le t$.
We have arrived at the following result.

\begin{proposition}\label{prop:semiopen_quotient}
 Suppose $\O X = \langle G \text{ $\wedge$-semilattice} \mid R \rangle_\Frm$ is a $\Sup$-type presentation and let $q\colon X \twoheadrightarrow Y$ be a semi-open quotient.
 Then
 \begin{align*}
 \O Y \cong \langle \lozenge g,\, g \in G \mid {}
     & R,\ \lozenge 1 = 1, \\
     & \lozenge s \wedge \lozenge t = \lozenge (q^*q_!(s) \wedge q^*q_!(t)),\, s, t \in G \rangle_\Frm,
\end{align*}
where we interpret $\lozenge (q^*q_!(s) \wedge q^*q_!(t)) = \bigvee_{\alpha,\beta} \lozenge (s_\alpha \wedge t_\beta)$ for specified representations $q^*q_!(s) = \bigvee_\alpha s_\alpha$ and $q^*q_!(t) = \bigvee_\beta t_\beta$ in terms of generators.
\end{proposition}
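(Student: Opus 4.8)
The plan is to move from the given frame presentation of $\O X$ to a suplattice presentation (via the suplattice coverage theorem), adjoin the relations that cut $\O X$ down to $\O Y$, and then re-inflate the resulting suplattice presentation to a frame one. Throughout, write $j \coloneqq q^*q_!$. By the discussion around \cref{prop:open_quotient_endomorphism}, $j$ is a join-preserving closure operator on $\O X$, its poset of fixed points is (isomorphic to) $\O Y$, and $q_!\colon \O X \twoheadrightarrow \O Y$ realises $\O Y$ as the suplattice quotient of $\O X$ by the relations $j(a) \le a$, $a \in \O X$. Note also that $j(1) = 1$, since $j$ is inflationary.

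First I would apply the suplattice coverage theorem (\cref{thm:sup_coverage}) to the hypothesis $\O X = \langle G \text{ $\wedge$-semilattice} \mid R \rangle_\Frm$ to obtain $\O X \cong \langle G \text{ poset} \mid R \rangle_\Sup$. Adjoining the relations that present the quotient $q_!$ then gives a suplattice presentation of $\O Y$; because $j$ preserves joins and the image of $G$ generates $\O X$ under joins, the single family $j(g) \le g$ for $g \in G$ suffices, each such relation being rewritten as an inequality between joins of generators by fixing a representation $q^*q_!(g) = \bigvee_\alpha g_\alpha$. Thus $\O Y \cong \langle G \text{ poset} \mid R,\ j(g) \le g,\ g \in G \rangle_\Sup$.

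Next I would upgrade this to a frame presentation using the general fact that whenever a suplattice $L$ happens to be a frame one has, as frames,
\[
 L \cong \langle \{\lozenge a : a \in L\} \text{ suplattice} \mid \lozenge 1 = 1,\ \lozenge a \wedge \lozenge b = \lozenge(a \wedge b) \rangle_\Frm,
\]
the meet being computed in $L$, and moreover that these meet relations need only be imposed for $a,b$ ranging over a \emph{base} of $L$. Applying this to $L = \O Y$ with the base $q_!(G)$ and splicing in the suplattice presentation of the previous paragraph gives
\begin{align*}
 \O Y \cong \bigl\langle \{\lozenge g : g \in G\} \text{ poset} \mid {}
   & R,\ \lozenge j(g) \le \lozenge g,\ \lozenge 1 = 1, \\
   & \lozenge s \wedge \lozenge t = \lozenge(j(s) \wedge j(t)),\ g,s,t \in G \bigr\rangle_\Frm,
\end{align*}
where $\lozenge j(g)$ and $\lozenge(j(s) \wedge j(t))$ denote the joins of generators spelt out in the statement, the inner meets being taken in the $\wedge$-semilattice $G$. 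Finally, a short calculation trims the list: since $j(1) = 1$, the instance $s = 1$, $t = g$ of the meet relation (together with $\lozenge 1 = 1$) yields $\lozenge j(g) \le \lozenge g$, and the instances with $s \le t$ in $G$ yield the inequalities recording the poset structure on the generators, so a free frame on the bare set $\{\lozenge g : g \in G\}$ suffices. Unwinding $j = q^*q_!$ gives the asserted presentation.

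The step I expect to need the most care is this passage from a suplattice presentation to a frame one, together with its reduction to a base. For the frame-from-suplattice isomorphism one exhibits the evident frame surjection onto $L$ sending $\lozenge a \mapsto a$ and the suplattice section $a \mapsto \lozenge a$ (well defined precisely because the suplattice structure of $L$ is imposed), and checks they are mutually inverse; the one non-formal point is that the composite on the presented frame is the identity, which holds because the relation $\lozenge a \wedge \lozenge b = \lozenge(a \wedge b)$ turns finite meets of generators into generators, so that every element is a join of generators and any suplattice endomorphism fixing the generators must be the identity. The base reduction is then the computation $\lozenge a \wedge \lozenge b = (\bigvee_i \lozenge b_i) \wedge (\bigvee_j \lozenge b'_j) = \bigvee_{i,j}\lozenge(b_i \wedge b'_j) = \lozenge(a \wedge b)$, using frame distributivity and that $\lozenge(-)$ preserves joins. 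The remaining steps — invoking the coverage theorem, adjoining the quotient relations, and the pruning — are routine bookkeeping.
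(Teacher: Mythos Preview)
Your proposal is correct and follows essentially the same route as the paper: apply the suplattice coverage theorem, impose the quotient relations $j(g) \le g$, re-inflate via the ``frame from a suplattice that happens to be a frame'' presentation restricted to a base, and then prune the redundant relations using the $s=1$ and $s \le t$ instances of the meet condition. The only difference is that you supply a little extra justification for the frame-from-suplattice isomorphism and its base reduction, which the paper takes for granted.
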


If $q$ is an \emph{open} quotient then we can simply this further still.
\begin{corollary}\label{prop:open_quotient}
 Suppose $\O X = \langle G \text{ $\wedge$-semilattice} \mid R \rangle_\Frm$ is a $\Sup$-type presentation and let $q\colon X \twoheadrightarrow Y$ be an open quotient.
 Then
 \begin{align*}
 \O Y \cong \langle \lozenge g,\, g \in G \mid {}
     & R,\ \lozenge 1 = 1, \\
     & \lozenge s \wedge \lozenge t = \lozenge (s \wedge q^*q_!(t)),\, s, t \in G \rangle_\Frm,
\end{align*}
where we interpret $\lozenge (s \wedge q^*q_!(t)) = \bigvee_\beta \lozenge (s \wedge t_\beta)$ for specified representation $q^*q_!(t) = \bigvee_\beta t_\beta$.
\end{corollary}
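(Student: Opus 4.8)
The plan is to follow, essentially word for word, the derivation given just before \cref{prop:semiopen_quotient}, intervening only at the single point where the meet of two basic opens of $\O Y$ is rewritten in terms of the generators. Thus I would again assume without loss of generality that the presentation of $\O X$ is of $\Sup$-type, apply the suplattice coverage theorem (\cref{thm:sup_coverage}) to get $\O X \cong \langle G \text{ poset} \mid R\rangle_\Sup$, append the relations of the suplattice quotient $q_!$ to arrive at $\O Y \cong \langle G \text{ poset} \mid R,\ q^*q_!(g)\le g,\ g\in G\rangle_\Sup$, and then return to a frame presentation using the fact that a suplattice which is a frame is presented by symbols $\lozenge a$ (with $a$ ranging over a base) subject to $\lozenge 1 = 1$ and the meet relations $\lozenge a \wedge \lozenge b = \lozenge(a\wedge b)$, the meet being computed in the suplattice. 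The base of $\O Y$ here is $q_!(G)$.

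The one new ingredient is the rewriting of that meet. Since $q$ is an \emph{open} quotient, $q_!$ satisfies the Frobenius condition, and taking $a=s$, $b=q_!(t)$ for generators $s,t\in G$ gives
\[
  q_!(s)\wedge q_!(t) \;=\; q_!\bigl(s\wedge q^*q_!(t)\bigr).
\]
Fixing a representation $q^*q_!(t)=\bigvee_\beta t_\beta$ of this element of $\O X$ as a join of generators, the distributive law in $\O X$ gives $s\wedge q^*q_!(t)=\bigvee_\beta (s\wedge t_\beta)$ with each $s\wedge t_\beta$ again a generator (a meet in the $\wedge$-semilattice $G$), so the meet in $\O Y$ of the basic opens $q_!(s)$ and $q_!(t)$ is represented by $\bigvee_\beta\lozenge(s\wedge t_\beta)$ --- exactly the reading of $\lozenge(s\wedge q^*q_!(t))$ stipulated in the statement. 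Feeding this into the above recipe yields the claimed presentation. As different join-of-generator representatives of one and the same element of $\O Y$ produce relations that are interderivable modulo $\lozenge 1=1$ and the join-preservation of $\lozenge(-)$, using this simpler representative in place of $\lozenge(q^*q_!(s)\wedge q^*q_!(t))$ does not alter the presented frame.

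Lastly I would check, exactly as in the passage preceding \cref{prop:semiopen_quotient}, that the quotient relations $\lozenge q^*q_!(g)\le\lozenge g$ and the order relations among the generators are now redundant: taking $s=1$ in the new meet relation and using $\lozenge 1=1$ gives $\lozenge g=\lozenge q^*q_!(g)$, and taking $s\le t$ together with the fact that $q^*q_!$ is inflationary (so $s\wedge q^*q_!(t)=s$ in $\O X$) gives $\lozenge s\wedge\lozenge t=\lozenge s$, i.e.\ $\lozenge s\le\lozenge t$. I do not expect a serious obstacle here: the corollary is a one-step simplification of \cref{prop:semiopen_quotient}, and the only thing genuinely requiring care is the bookkeeping of the previous paragraph --- making precise that the passage from a suplattice-that-is-a-frame to a frame presentation is insensitive to which generator representative of the meet $q_!(s)\wedge q_!(t)$ one writes down, so that the Frobenius-simplified form is a legitimate substitute; the rest is a mechanical rerun of the argument already given.
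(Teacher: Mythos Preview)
Your argument is correct, but it takes a slightly different route from the paper's. You rerun the entire derivation that led to \cref{prop:semiopen_quotient}, inserting the Frobenius identity $q_!(s)\wedge q_!(t)=q_!(s\wedge q^*q_!(t))$ at the step where the meet in $\O Y$ is expressed via generators. The paper instead treats \cref{prop:semiopen_quotient} as already established and simply shows that the two families of meet relations are interderivable: from the corollary's relation one first extracts $\lozenge g = \lozenge q^*q_!(g)$ (by setting $s=1$), and then the Frobenius law $q^*q_!(s)\wedge q^*q_!(t) = q^*q_!(s\wedge q^*q_!(t))$ converts $\lozenge(s\wedge q^*q_!(t))$ into the proposition's $\lozenge(q^*q_!(s)\wedge q^*q_!(t))$; the converse is immediate. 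The paper's route is shorter and more modular, since it reuses the proposition rather than reproving it; your route is more self-contained but duplicates work. Your careful remark that the presentation is insensitive to which join-of-generators representative of the meet one writes down is a point the paper leaves implicit, and is worth keeping.
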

\begin{proof}
 Again, taking $s = 1$ and $t = g$ gives the relation $\lozenge g = \lozenge q^*q_!(g)$.
 Then the Frobenius condition gives $q^*q_!(s) \wedge q^*q_!(t) = q^*q_!(s \wedge q^*q_!(t))$.
 Combining these we obtain the meet relations given in \cref{prop:semiopen_quotient} and the converse direction is straightforward.
\end{proof}

\begin{remark}
 Our derived presentation can be understood as describing the quotient locale $Y$ as a sublocale of the \emph{lower powerlocale} of $X$.
 The frame of opens of the lower powerlocale is the free frame on the underlying suplattice of $\O X$ and its points correspond to (overt, weakly) closed sublocales of $X$.
 We can identify points of $Y$ with the sublocales of $X$ that appear as the (weak) closures of their fibres under $q\colon X \to Y$. Intuitively, we can view it as a space of equivalence classes.
 See \cite{Vickers1997PowerlocalePoints,vickers1994pointless} for more details on the lower powerlocale and its relation to semi-open maps.
\end{remark}

\subsection{Presenting proper quotients}

We can proceed similarly for semi-proper quotients.
Recall that a semi-proper quotient $q\colon X \twoheadrightarrow Y$ can be specified by an interior operator and preframe endomorphism $p\colon \O X \to \O X$.
The map $q_*$ is the preframe quotient given by setting $a \lesssim p(a)$.

Suppose $\O X$ has a $\PreFrm$-type presentation $\langle G \text{ $\vee$-semilattice} \mid R \rangle_\Frm$.
Applying the preframe coverage theorem (\ref{thm:prefrm_coverage}) and taking the quotient we have $\O Y \cong \langle G \text{ poset} \mid R,\ g \le p(g),\, g \in G \rangle_\PreFrm$.
Here we expand each $p(g)$ as a directed join of finite meets of generators.

Now we need to turn this into a frame presentation.
Similarly to before, if a preframe $L$ happens to be a frame, then $L \cong \langle \{\square a \mid a \in L\} \text{ preframe} \mid \square 0 = 0,\ \square (a \vee b) = \square a \vee \square b,\, a,b \in L \rangle_\Frm$.
We may also restrict $a,b$ to lie in a preframe generating set.
In our case, the generators $a,b$ in $\O X$ map to $q_*(a), q_*(b)$ in the preframe quotient and so their join in $\O Y$ corresponds to $p(a) \vee p(b)$ in $\O X$.
Combining this with the preframe presentation above and eliminating redundant relations as before we arrive as the following result.

\begin{proposition}\label{prop:semiproper_quotient}
 Suppose $\O X = \langle G \text{ $\vee$-semilattice} \mid R \rangle_\Frm$ is a $\PreFrm$-type presentation and let $q\colon X \twoheadrightarrow Y$ be a semi-proper quotient.
 Then
 \begin{align*}
 \O Y \cong \langle \square g,\, g \in G \mid {}
     & R,\ \square 0 = 0 \\
     & \square s \vee \square t = \square (q^* q_*(s) \vee q^* q_*(t)),\, s, t \in G \rangle_\Frm,
 \end{align*}
 where $\square (q^* q_*(s) \vee q^* q_*(t)) = \dirsup[\alpha,\beta] \bigwedge_{i_\alpha,j_\beta} \square (s_\alpha^{i_\alpha} \vee t_\beta^{j_\beta})$ for specified representations $q^*q_!(s) = \dirsup[\alpha] \bigwedge_{i_\alpha} s_\alpha^{i_\alpha}$ and $q^*q_!(t) = \dirsup[\beta] \bigwedge_{j_\beta} t_\beta^{j_\beta}$ in terms of generators.
\end{proposition}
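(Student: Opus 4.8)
The plan is to mirror, in the order-dual setting, the derivation just given for \cref{prop:semiopen_quotient}, trading the suplattice coverage theorem for the preframe coverage theorem and the lower-powerlocale encoding for its upper analogue; indeed much of the argument has already been carried out in the discussion preceding the statement, so the task is to assemble it into a proof and check the redundancy eliminations. First I would record that, since $q$ is a semi-proper quotient, $q_*\colon \O X \to \O Y$ is a preframe surjection whose kernel congruence is generated by the relations $a \lesssim p(a)$, where $p = q^*q_*$ is the associated interior operator. Applying the preframe coverage theorem (\cref{thm:prefrm_coverage}) to the given $\PreFrm$-type presentation of $\O X$ and adjoining these relations yields the preframe presentation $\O Y \cong \langle G \text{ poset} \mid R,\ g \le p(g),\, g \in G\rangle_\PreFrm$, where each $p(g)$ is expanded as a directed join of finite meets of generators; imposing $g \le p(g)$ only on generators suffices because $G$ generates $\O X$, hence $\O Y$, as a preframe.

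Next I would invoke the general encoding of a frame as the free frame on its underlying preframe modulo $\square 0 = 0$ and $\square(a \vee b) = \square a \vee \square b$, with $a, b$ allowed to range over any preframe-generating set, here the image $\{q_*(g) \mid g \in G\}$. The key computation identifies the binary join in $\O Y$: for $s, t \in G$ one has $q_*(s) \vee q_*(t) = q_* q^*\bigl(q_*(s) \vee q_*(t)\bigr) = q_*\bigl(q^*q_*(s) \vee q^*q_*(t)\bigr) = q_*\bigl(p(s) \vee p(t)\bigr)$, using $q_* q^* = \id$ and that $q^*$ preserves binary joins. Because $G$ is a $\vee$-semilattice, distributivity in $\O X$ rewrites $p(s) \vee p(t)$ as the directed join $\dirsup[\alpha,\beta]\bigwedge_{i_\alpha, j_\beta}(s_\alpha^{i_\alpha} \vee t_\beta^{j_\beta})$ of finite meets of generators; applying the preframe homomorphism $q_*$ and then the (preframe-homomorphic, in the presented frame) symbol $\square$ shows that the relation $\square s \vee \square t = \square\bigl(q_*(s) \vee q_*(t)\bigr)$ takes exactly the stated form. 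Substituting the preframe presentation of $\O Y$ for its underlying preframe then gives a frame presentation of $\O Y$ on generators $\square g$ with relations $R$ (read through $\square$), $\square g \le \square p(g)$, $\square 0 = 0$, and these join relations.

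Finally I would clear the redundant relations: taking $s = 0$ and $t = g$ in the join relation and using $p(0) = 0$ together with $\square 0 = 0$ yields $\square g = \square p(g)$, which subsumes $\square g \le \square p(g)$, while taking $s \le t$ recovers the order relations among the generators, so the poset structure on $\{\square g\}$ may be discarded; what remains is precisely the presentation in the statement. I expect the only real obstacle to be the bookkeeping in the middle paragraph — keeping straight how a binary join downstairs in $\O Y$ is pulled back along $q^*$, reshaped into a directed join of finite meets of generators via the interior operator and distributivity, and then correctly interpreted by $\square$ — but this is routine once one is careful that $q_*$ is merely a preframe homomorphism, so that $q_*(s \vee t) \ne q_*(s) \vee q_*(t)$ in general and the detour through $p(s) \vee p(t)$ is genuinely needed.
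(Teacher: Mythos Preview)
Your proposal is correct and follows essentially the same approach as the paper: apply the preframe coverage theorem to get a preframe presentation of $\O Y$, then use the free-frame-on-a-preframe encoding with the join relations, and finally eliminate the redundant order and $g \le p(g)$ relations via the $s=0$ specialisation. You have in fact filled in slightly more detail than the paper does --- in particular the explicit computation $q_*(s) \vee q_*(t) = q_* q^*\bigl(q_*(s) \vee q_*(t)\bigr) = q_*\bigl(p(s) \vee p(t)\bigr)$ using $q_* q^* = \id$, where the paper just says ``their join in $\O Y$ corresponds to $p(a) \vee p(b)$ in $\O X$'' --- but the argument is the same.
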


If $q$ is an \emph{proper} quotient we can again use the Frobenius condition to give the following simplification.
\begin{corollary}\label{prop:proper_quotient}
 Suppose $\O X = \langle G \text{ $\vee$-semilattice} \mid R \rangle_\Frm$ is a $\PreFrm$-type presentation and let $q\colon X \twoheadrightarrow Y$ be a proper quotient.
 Then
 \begin{align*}
 \O Y \cong \langle \square g,\, g \in G \mid {}
     & R,\ \square 0 = 0 \\
     & \square s \vee \square t = \square (s \vee q^* q_*(t)),\, s, t \in G \rangle_\Frm,
 \end{align*}
 where $\square (s \vee q^* q_*(t)) = \dirsup[\beta] \bigwedge_{j_\beta} \square (s \vee t_\beta^{j_\beta})$ for specified representation $q^*q_!(t) = \dirsup[\beta] \bigwedge_{j_\beta} t_\beta^{j_\beta}$.
\end{corollary}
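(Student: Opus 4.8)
The plan is to deduce the statement from \cref{prop:semiproper_quotient} --- which already applies, since a proper quotient is in particular a semi-proper quotient --- by checking that, modulo $R$ and the relation $\square 0 = 0$, the join relations displayed above generate the same frame congruence as the ones appearing there. Conceptually the whole argument is the order-theoretic dual of the passage from \cref{prop:semiopen_quotient} to \cref{prop:open_quotient}: one reads $\PreFrm$ for $\Sup$, $\square$ for $\lozenge$, directed joins of finite meets for finite meets, $q_*$ for $q_!$, and the interior operator $q^*q_*$ for the closure operator.

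First I would observe that substituting $s = 0$ and $t = g$ into the corollary's relation gives $\square g = \square\bigl(0 \vee q^*q_*(g)\bigr) = \square q^*q_*(g)$, using $\square 0 = 0$; and the same identity follows from the relations of \cref{prop:semiproper_quotient}, once one notes that $q^*q_*(0) = 0$ because the interior operator $q^*q_*$ is deflationary. Since $q^*q_*$ preserves directed joins and finite meets, this identity then propagates from the generators to arbitrary $u \in \O X$, yielding $\square u = \square q^*q_*(u)$ (computed through any chosen directed-join-of-finite-meets representation of $u$).

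The only place where \emph{properness} genuinely enters is the Frobenius condition $q_*(a \vee q^*(b)) = q_*(a) \vee b$. Applying $q^*$, and using that $q^*$ preserves joins and that $q_*q^* = \id$, this yields the identity $q^*q_*\bigl(s \vee q^*q_*(t)\bigr) = q^*q_*(s) \vee q^*q_*(t)$ in $\O X$. Feeding this identity, the fact that $\square u = \square q^*q_*(u)$, and the distributive-lattice laws valid in the presented frame into the right-hand side $\square\bigl(q^*q_*(s) \vee q^*q_*(t)\bigr)$ of the relation in \cref{prop:semiproper_quotient} should show that it coincides with $\square\bigl(s \vee q^*q_*(t)\bigr)$, so that the relation of \cref{prop:semiproper_quotient} and the corollary's relation are interderivable over $R$ and $\square 0 = 0$.

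The step I expect to require the most care is bookkeeping rather than mathematics: $\square$ is defined only on generators and is extended to an element $u \in \O X$ through a \emph{chosen} representation of $u$ as a directed join of finite meets of generators, so I must make sure the manipulations above --- which tacitly pass between the representations of $s \vee q^*q_*(t)$ and of $q^*q_*(s) \vee q^*q_*(t)$ --- respect this. This is justified by the preframe coverage theorem (\ref{thm:prefrm_coverage}), which identifies $\square$ with the preframe homomorphism $q_*$ and hence makes any two such representations of the same element of $\O X$ equal modulo the relations $R$; spelling this out carefully is essentially all the write-up needs to do.
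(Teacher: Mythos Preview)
Your proposal is correct and matches the paper's own argument: the paper gives no separate proof for this corollary, merely remarking that one ``can again use the Frobenius condition to give the following simplification'', i.e.\ that the proof is the order-theoretic dual of the passage from \cref{prop:semiopen_quotient} to \cref{prop:open_quotient}, which is precisely what you spell out (substitute $s=0$ to obtain $\square g = \square q^*q_*(g)$, then use Frobenius in the form $q^*q_*(s \vee q^*q_*(t)) = q^*q_*(s) \vee q^*q_*(t)$ to interderive the two join relations). Your extra paragraph on the bookkeeping of representations is more than the paper provides, but it is sound and does not depart from the intended line.
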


\begin{remark}
 This time our presentation can be thought of as expressing $Y$ as a sublocale of the \emph{upper powerlocale} of $X$.
 The frame of opens of the upper powerlocale is the free frame on the underlying preframe of $\O X$ and its points correspond to compact fitted sublocales of $X$.
 As before, we can think of $Y$ as a space of equivalence classes: the points of $Y$ correspond to the fitting of the fibres of $q\colon X \to Y$.
 Again see \cite{Vickers1997PowerlocalePoints,vickers1994pointless} for more details on these concepts.
 \Cref{prop:semiopen_quotient,prop:semiproper_quotient} can also be compared to the construction given in the proof of (ii) $\Rightarrow$ (iii) of \cite[Theorem 2]{townsend2005categorical}.
\end{remark}

\subsection{Presenting triquotient locales}

The general semi-triquotient case is again similar. Let $q\colon X \twoheadrightarrow Y$ be a locale map such that $q^*$ has a left inverse dcpo morphism $q_\#\colon \O X \to \O Y$. Such a semi-triquotient can represented by a dcpo endomorphism $e$ satisfying the necessary conditions and the retraction onto the fixed points induced by $e$ corresponds the dcpo quotient map $q_\#$, which is specified by setting $a \sim e(a)$.

Now suppose $\O X$ has a $\Dcpo$-type presentation $\langle G \text{ dist.\ lattice} \mid R\rangle_\Frm$. Using the dcpo coverage theorem (\ref{thm:dcpo_coverage}) and taking the quotient we have $\O Y \cong \langle G \text{ poset} \mid R,\ g = e(g),\, g \in G \rangle_\Dcpo$, where each $e(g)$ is expressed explicitly as a directed join of generators.
Then as before we can turn this into a frame presentation to obtain the following.

\begin{proposition}\label{prop:semitriquotient}
 Suppose $\O X = \langle G \text{ dist.\ lattice} \mid R \rangle_\Frm$ is a $\Dcpo$-type presentation and let $q\colon X \twoheadrightarrow Y$ be a semi-triquotient with triquotiency assignment $q_\#\colon \O X \to \O Y$.
 Then
 \begin{align*}
 \O Y \cong \langle {\boxtimes} g,\, g \in G \mid {}
     & R,\ {\boxtimes} 1 = 1,\ {\boxtimes} 0 = 0 \\
     & {\boxtimes} s \wedge {\boxtimes} t = {\boxtimes} (q^*q_\#(s) \wedge q^*q_\#(t)), \\
     & {\boxtimes} s \vee {\boxtimes} t = {\boxtimes} (q^* q_\#(s) \vee q^* q_\#(t)),\ s, t \in G \rangle_\Frm,
 \end{align*}
 where ${\boxtimes} (q^*q_\#(s) \wedge q^*q_\#(t)) = \dirsup[\alpha,\beta] {\boxtimes} (s_\alpha \wedge t_\beta)$
 and ${\boxtimes} (q^* q_\#(s) \vee q^* q_\#(t)) = \dirsup[\alpha,\beta] {\boxtimes} (s_\alpha \vee t_\beta)$ for specified representations $q^*q_\#(s) = \dirsup[\alpha] s_\alpha$ and $q^*q_\#(t) = \dirsup[\beta] t_\beta$ in terms of generators.
\end{proposition}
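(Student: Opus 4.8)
The plan is to follow the three-step template used to derive \cref{prop:semiopen_quotient,prop:semiproper_quotient}, now with the dcpo coverage theorem (\cref{thm:dcpo_coverage}) in place of the suplattice and preframe ones. As discussed above, the semi-triquotient $q$ is encoded by the idempotent Scott-continuous endomorphism $e \coloneqq q^{*}q_{\#}$ of $\O X$, whose fixed points form a subframe of $\O X$ isomorphic to $\O Y$, and $q_{\#}$ is the dcpo coequaliser of $e$ and $\id_{\O X}$, so that $\O Y$ is obtained from $\O X$ as a dcpo by imposing $a \sim e(a)$.

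First I would apply \cref{thm:dcpo_coverage} to obtain $\O X \cong \langle G \text{ poset} \mid R\rangle_{\Dcpo}$, in which --- as always for a dcpo presented on generators --- every element is a directed join of generators; since $e$ is Scott-continuous it is then enough to impose the coequaliser relations on generators, so taking the dcpo quotient gives $\O Y \cong \langle G \text{ poset} \mid R,\ g = e(g),\ g \in G\rangle_{\Dcpo}$, with each $e(g) = q^{*}q_{\#}(g)$ written out as a chosen directed join $\dirsup[\alpha] g_{\alpha}$ of generators. Next I would turn this into a frame presentation using the dcpo analogue of the facts used in the previous two subsections: when a dcpo $L$ happens to be a frame, $L \cong \langle \{\boxtimes a \mid a \in L\} \text{ dcpo} \mid \boxtimes 0 = 0,\ \boxtimes 1 = 1,\ \boxtimes(a \wedge b) = \boxtimes a \wedge \boxtimes b,\ \boxtimes(a \vee b) = \boxtimes a \vee \boxtimes b\rangle_{\Frm}$, and the last two relations need only be imposed for $a,b$ in a dcpo-generating set, the rest following by directed joins since $\boxtimes(-)$ is Scott-continuous. (Here, unlike in the $\Sup$- and $\PreFrm$-type cases, both $\boxtimes 0 = 0$ and $\boxtimes 1 = 1$ must be kept, since a dcpo morphism preserves neither the empty join nor the empty meet.) Applying this to $\O Y$ with its dcpo presentation above and dcpo-generating set the images of $G$ --- writing $\boxtimes g$ for the generator coming from $g \in G$ --- yields a frame presentation on the $\boxtimes g$ carrying the relations $R$, the relations $g = e(g)$, the bounds $\boxtimes 0 = 0$ and $\boxtimes 1 = 1$, and two relations equating $\boxtimes s \wedge \boxtimes t$ and $\boxtimes s \vee \boxtimes t$ with the binary meet and join in $\O Y$ of the images of $s$ and $t$, which still have to be rewritten in terms of the generators.

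It remains to carry out that rewriting and to prune the redundant relations. Since $q^{*}$ is an injective frame homomorphism with $q^{*}q_{\#} = e$, applying $q^{*}$ sends $\boxtimes s \wedge \boxtimes t$ to $e(s) \wedge e(t)$ in $\O X$; expanding $e(s) = \dirsup[\alpha] s_{\alpha}$ and $e(t) = \dirsup[\beta] t_{\beta}$, using that binary meet distributes over directed joins in a frame together with the closure of $G$ under $\wedge$, and then applying the idempotent Scott-continuous $e$, gives $e(s) \wedge e(t) = \dirsup[\alpha,\beta] e(s_{\alpha} \wedge t_{\beta})$. By injectivity of $q^{*}$ this means that in terms of generators $\boxtimes s \wedge \boxtimes t = \dirsup[\alpha,\beta] \boxtimes(s_{\alpha} \wedge t_{\beta})$, which is precisely the interpretation the statement attaches to $\boxtimes(q^{*}q_{\#}(s) \wedge q^{*}q_{\#}(t))$; the join relation is obtained the same way (and more easily, joins commuting with directed joins automatically). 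Finally, taking $s = 1$ in the meet relation and using $\boxtimes 1 = 1$ and $q^{*}q_{\#}(1) = 1$ recovers $\boxtimes t = \boxtimes q^{*}q_{\#}(t)$ for every $t$, so the relations $g = e(g)$ are redundant, and the order relations carried along with the poset $G$ then follow from the meet relation exactly as in \cref{prop:semiopen_quotient}. What remains is the displayed presentation.

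The one ingredient that is not pure bookkeeping is the dcpo-to-frame presentation lemma of the second step, together with the reduction to a generating set; this is the exact counterpart of the suplattice and preframe statements already invoked (there without detailed proof), and I would prove it the same way, by checking that the evident frame surjection from the presented frame onto $\O Y$ has inverse $a \mapsto \boxtimes a$ --- a frame homomorphism precisely because $\boxtimes(-)$ is a dcpo morphism while the imposed relations force it to preserve finite meets, finite joins and the bounds. The only other step needing a moment's thought is the identity $e(s) \wedge e(t) = \dirsup[\alpha,\beta] e(s_{\alpha}\wedge t_{\beta})$ above, which rests on idempotence and Scott-continuity of $e$ and on frame distributivity; everything else reduces to \cref{thm:dcpo_coverage} and routine manipulation of relations, as in the two preceding propositions.
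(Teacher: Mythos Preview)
Your proposal is correct and follows essentially the same three-step template the paper uses (dcpo coverage theorem, dcpo quotient by $g = e(g)$, then the dcpo-to-frame presentation lemma with subsequent pruning of the $g = e(g)$ and order relations). The paper's own argument is in fact just the sentence ``Then as before we can turn this into a frame presentation to obtain the following'', so you have simply unpacked the details; your extra step of reapplying $e$ to obtain $e(s)\wedge e(t)=\dirsup[\alpha,\beta] e(s_\alpha\wedge t_\beta)$ is harmless but unnecessary, since $q_\# q^*=\id$ already gives $q_\#(s)\wedge q_\#(t)=\dirsup[\alpha,\beta] q_\#(s_\alpha\wedge t_\beta)$ directly.
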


Finally, if $q$ is an \emph{triquotient} we can again simplify things a little.
\begin{corollary}\label{prop:triquotient}
 Suppose $\O X = \langle G \text{ dist.\ lattice} \mid R \rangle_\Frm$ is a $\Dcpo$-type presentation and let $q\colon X \twoheadrightarrow Y$ be a triquotient with triquotiency assignment $q_\#\colon \O X \to \O Y$.
 Then
 \begin{align*}
 \O Y \cong \langle {\boxtimes} g,\, g \in G \mid {}
     & R,\ {\boxtimes} 1 = 1,\ {\boxtimes} 0 = 0 \\
     & {\boxtimes} s \wedge {\boxtimes} t = {\boxtimes} (s \wedge q^*q_\#(t)), \\
     & {\boxtimes} s \vee {\boxtimes} t = {\boxtimes} (s \vee q^* q_\#(t)),\ s, t \in G \rangle_\Frm,
 \end{align*}
 where ${\boxtimes} (s \wedge q^*q_\#(t)) = \dirsup[\beta] {\boxtimes} (s \wedge t_\beta)$
 and ${\boxtimes} (s \vee q^* q_\#(t)) = \dirsup[\beta] {\boxtimes} (s \vee t_\beta)$ for specified representation $q^*q_\#(t) = \dirsup[\beta] t_\beta$.
\end{corollary}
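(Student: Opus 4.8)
The plan is to deduce this corollary from \cref{prop:semitriquotient} in exactly the way \cref{prop:open_quotient} was deduced from \cref{prop:semiopen_quotient} and \cref{prop:proper_quotient} from \cref{prop:semiproper_quotient}: I will show that, modulo $R$ and the relations ${\boxtimes} 1 = 1$ and ${\boxtimes} 0 = 0$, the meet and join relations of \cref{prop:triquotient} are interderivable with those of \cref{prop:semitriquotient}, the simplification being made possible by the Frobenius conditions for the triquotiency assignment.

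Write $e = q^*q_\#$. Two observations drive the argument. First, since $q_\# q^* = \id_{\O Y}$ we have $q_\# = q_\# e$, so, identifying ${\boxtimes}\colon\O X \to \O Y$ with $q_\#$, we get ${\boxtimes} a = {\boxtimes} e(a)$ for every $a \in \O X$; in particular, putting $s = 1$ in the meet relation and using ${\boxtimes} 1 = 1$ (or $s = 0$ in the join relation and ${\boxtimes} 0 = 0$) recovers ${\boxtimes} g = {\boxtimes} q^*q_\#(g)$ for each generator $g$. Secondly, applying the triquotiency identity $q_\#(a \wedge q^*(b)) = q_\#(a) \wedge b$ with $a = s$ and $b = q_\#(t)$ and then applying $q^*$, and likewise for joins, yields the $\O X$-identities $q^*q_\#(s) \wedge q^*q_\#(t) = q^*q_\#(s \wedge q^*q_\#(t))$ and $q^*q_\#(s) \vee q^*q_\#(t) = q^*q_\#(s \vee q^*q_\#(t))$. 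Putting the two together gives ${\boxtimes}(q^*q_\#(s) \wedge q^*q_\#(t)) = {\boxtimes} e(s \wedge q^*q_\#(t)) = {\boxtimes}(s \wedge q^*q_\#(t))$ and dually for joins, so the relations of \cref{prop:triquotient} hold in the frame of \cref{prop:semitriquotient}; reading the same chain of equalities backwards gives the converse, so the two presentations agree.

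As in the $\Sup$- and $\PreFrm$-cases, the one point needing a little care is the bookkeeping with the chosen directed-join representations of $q^*q_\#(s)$ and $q^*q_\#(t)$ in terms of generators: one must know that ${\boxtimes}$ really does extend to the dcpo morphism $q_\#\colon\O X \to \O Y$, so that ${\boxtimes} a$ is independent of the representation of $a$ as a directed join of generators and the formal expressions $\dirsup[\alpha,\beta]{\boxtimes}(s_\alpha \wedge t_\beta)$ and $\dirsup[\beta]{\boxtimes}(s \wedge t_\beta)$ genuinely name the same element of the presented frame. This is exactly what the dcpo coverage theorem (\cref{thm:dcpo_coverage}) and the discussion preceding \cref{prop:semitriquotient} supply; it is the main --- and rather mild --- obstacle, and once it is in place the proof is a routine substitution.
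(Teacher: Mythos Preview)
Your proposal is correct and is precisely the argument the paper intends: the corollary is stated without proof, relying on the same Frobenius-based simplification spelled out for \cref{prop:open_quotient}, and your reduction from \cref{prop:semitriquotient} via ${\boxtimes} g = {\boxtimes} q^*q_\#(g)$ together with $q^*q_\#(s) \wedge q^*q_\#(t) = q^*q_\#(s \wedge q^*q_\#(t))$ (and the dual for joins) is exactly that. Your remark on the bookkeeping with the chosen directed-join representations is appropriate and does not go beyond what the paper assumes.
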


\begin{remark}
 While these (semi-)triquotient results technically subsume the previous ones, they are all useful, since the presentations obtained from the more specific results will be simpler those given by the most general one.
\end{remark}

\begin{remark}
 This time the intuition is less clear, but the presentation is related to viewing $Y$ is a sublocale of the \emph{double powerlocale} of $X$ (see \cite{vickers2004double}).
 The double powerlocale arises from the adjunction between frames and dcpos is equal to the composition of the upper and lower powerlocales (in either order). Its points can be viewed as certain overt collections of compact sublocales.
 
 It is natural to ask if yet more general quotients might arise by using further composites of upper and lower powerlocales, but essentially because the double powerlocale is a retract of the `quadruple powerlocale', no new quotients are obtained after the second level.
\end{remark}

\section{Examples and applications}

The main utility of these results is in deriving concrete presentations, but they do have at least one theoretical consequence:
namely, that the size of the presentation of the quotient is not so different from that of the presentation of the parent locale.
In particular, we can immediately deduce the following.
\begin{proposition}
 A semi-triquotient of a countably presented locale is countably presented.
\end{proposition}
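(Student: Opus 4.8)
The plan is to reduce everything to \cref{prop:semitriquotient} and then count generators and relations. A locale is \emph{countably presented} when its frame admits a presentation with countably many generators and countably many relations, so suppose $\O X = \langle G_0 \mid R_0 \rangle_\Frm$ with $G_0$ and $R_0$ countable, and let $q\colon X \twoheadrightarrow Y$ be a semi-triquotient. The first step is to normalise this presentation to $\Dcpo$-type form following the recipe in the remark after \cref{thm:dcpo_coverage}, while tracking cardinalities. Replace $G_0$ by the bounded distributive lattice $G$ it generates; since there are only finitely many lattice operations and $G_0$ is countable, $G$ is countable. By distributing joins over meets, every element of the free frame on $G_0$ is a join of elements of $G$, hence a \emph{directed} join of elements of $G$ (using that $G$ is closed under finite joins); so each relation of $R_0$ can be rewritten as a single relation of the form $\dirsup A \le \dirsup B$ with $A, B \subseteq G$. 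Finally close this countable set of relations under the meet- and join-stability clauses: for each relation and each $c \in G$ this adjoins two further relations, and since both index sets are countable the resulting set $R$ is still countable. Thus, without loss of generality, $\O X = \langle G \text{ dist.\ lattice} \mid R \rangle_\Frm$ is a $\Dcpo$-type presentation with $G$ and $R$ countable.

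Now apply \cref{prop:semitriquotient} to $q$. It presents $\O Y$ with generating set $\{\, {\boxtimes} g \mid g \in G \,\}$, which is countable, and with relations consisting of the countably many relations of $R$, the two relations ${\boxtimes} 1 = 1$ and ${\boxtimes} 0 = 0$, and the meet- and join-relations indexed by pairs $(s,t) \in G \times G$. Since $G \times G$ is countable there are only countably many of the latter. The point to notice is that each of these is a \emph{single} formal (in)equality in the free frame on $\{\, {\boxtimes} g \,\}$: although the right-hand side ${\boxtimes}(q^*q_\#(s) \wedge q^*q_\#(t)) = \dirsup[\alpha,\beta] {\boxtimes}(s_\alpha \wedge t_\beta)$, and likewise for the join, may expand into an infinite join of generators, that join is still one element of the free frame, so we still have one relation. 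Hence $\O Y$ admits a presentation with countably many generators and countably many relations, so $Y$ is countably presented.

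The only real work is the bookkeeping in the first step, namely checking that normalising to $\Dcpo$-type form does not secretly introduce uncountably many relations; I do not expect a genuine obstacle there, since each original relation contributes exactly one relation after rewriting and the stability closure is indexed by (relations) $\times\, G$, which stays countable. All the conceptual content is already carried by \cref{prop:semitriquotient}.
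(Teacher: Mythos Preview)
Your proof is correct and is precisely the argument the paper has in mind: the paper does not give an explicit proof but simply says the result is ``immediately deduced'' from \cref{prop:semitriquotient}, and what you have written is exactly that deduction with the bookkeeping spelled out. The only detail worth adding is that the directed joins appearing on the right-hand sides are automatically countable joins because their terms lie in the countable lattice $G$, so there is no hidden size issue even under stricter readings of ``countably presented''.
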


Classically, countably presented locales coincide with \emph{quasi-Polish} spaces (see \cite{debrecht2013quasipolish,heckmann2015}).
Thus, as a corollary we obtain the following result, which known for open quotients, but which I have not seen stated before at this level of generality.
\begin{corollary}
 Let $X$ be a quasi-Polish space, let $Y$ be sober and let $X \twoheadrightarrow Y$ be a triquotient map. Then $Y$ is quasi-Polish.
\end{corollary}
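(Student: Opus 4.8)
The plan is to reduce everything to the previous proposition via the known equivalence between countably presented locales and quasi-Polish spaces. First I would recall the de Brecht--Heckmann characterisation cited above: classically, a sober space is quasi-Polish exactly when its frame of opens is countably presented. So from the hypothesis that $X$ is quasi-Polish we obtain that $\O X$ admits a countable presentation, and since $Y$ is already assumed sober, it suffices to show that $\O Y$ is countably presented as well (then $Y$ is recovered as the space of points of the countably presented locale $\O Y$, which is quasi-Polish by the same theorem in the other direction).

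Next I would check that the continuous surjection $X \twoheadrightarrow Y$ gives rise to a triquotient surjection of locales. The notion of triquotient map for spaces is defined precisely by the existence of a function $f_\#\colon \O X \to \O Y$ on the open-set lattices satisfying the triquotiency identities, so the induced frame homomorphism $q^*\colon \O Y \to \O X$ already comes equipped with the required dcpo morphism $q_\#$. It then remains to see that $q$ is epic in $\Loc$, i.e.\ that $q^*$ is injective; this follows from surjectivity of $X \to Y$ together with the $T_0$ separation of the sober space $Y$, since two opens of $Y$ with equal preimage must coincide. Hence $q$ is a triquotient surjection of locales with triquotiency assignment $q_\#$, and (as noted in the background) $q_\#$ is automatically a retraction of $q^*$ in $\Dcpo$.

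Finally I would simply invoke the preceding proposition: a semi-triquotient---in particular a triquotient surjection---of a countably presented locale is countably presented. Applying this to $q$ gives that $\O Y$ is countably presented, and combining with the first paragraph yields that $Y$ is quasi-Polish.

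The only real obstacle is the bookkeeping in the second step: making sure that the space-level notion of triquotient map lines up exactly with the locale-level notion of triquotient surjection, and in particular that the surjectivity hypothesis does give injectivity of $q^*$ and hence that $q$ is epic. Everything else is a direct application of the previous proposition together with a citation of the known equivalence between countably presented locales and quasi-Polish spaces.
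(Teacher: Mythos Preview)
Your proposal is correct and matches the paper's (entirely implicit) argument: the corollary is stated with no proof beyond the sentence introducing it, so the intended justification is exactly what you wrote---combine the previous proposition with the classical identification of quasi-Polish spaces with countably presented locales. One small redundancy: you do not need the separate argument that surjectivity plus $T_0$ gives injectivity of $q^*$, since (as you yourself note) the triquotiency identities already force $q_\#$ to be a retraction of $q^*$ in $\Dcpo$, so $q^*$ is injective automatically.
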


We now conclude with some illustrative examples. First we obtain a presentation of the circle $\T$ from the presentation of $\R$ using the open quotient $\R \twoheadrightarrow \T$.

\begin{example}
The usual presentation for the frame of reals $\O\R$ has a generator $\llround q, \infty\rrround$ and a generator $\llround -\infty, q\rrround$ for each $q \in \Q$. They satisfy the following relations.
\begin{itemize}
 \item $\llround p, \infty\rrround \wedge \llround -\infty, q\rrround = 0$ for $p \ge q$,
 \item $\llround p, \infty\rrround \vee \llround -\infty, q\rrround = 1$ for $p < q$,
 \item $\llround q, \infty\rrround = \bigvee_{p > q} \llround p, \infty\rrround$ and $\llround -\infty, q\rrround = \bigvee_{p < q} \llround -\infty, p\rrround$ for all $q \in \Q$,
 \item $\bigvee_{q \in \Q} \llround q, \infty\rrround = 1$ and $\bigvee_{q \in \Q} \llround -\infty, q\rrround = 1$.
\end{itemize}

To apply our result the presentation must be modified to be of the appropriate form.
In particular, the generators should be closed under finite meets. Setting $\llround p, q\rrround = \llround p, \infty\rrround \wedge \llround -\infty, q\rrround$ and $\llround -\infty, \infty \rrround = 1$
we have defined generators $\llround p, q\rrround$ for each $p \in \Q \sqcup \{-\infty\}$ and $q \in \Q \sqcup \{\infty\}$. These form a $\wedge$-semilattice with $\llround p, q \rrround \wedge \llround p', q' \rrround = \llround p \vee p', q \wedge q'\rrround$.
The remaining axioms become:
\begin{itemize}
 \item $\llround p,q\rrround = 0$ for $p \ge q$,
 \item $\llround p,q\rrround \vee \llround p',q'\rrround = \llround p,q'\rrround$ for $p \le p' < q \le q'$,
 \item $\llround p,q\rrround = \bigvee_{p < p' < q' < q} \llround p',q'\rrround$ for all $p \in \Q \sqcup \{-\infty\}$ and $q \in \Q \sqcup \{\infty\}$.
\end{itemize}
In fact, the first bullet point here is redundant, since it follows from the third.
We note that these relations (essentially) satisfy the necessary meet-stability conditions for this to be a $\Sup$-type presentation.
Let us show this for a representative case.

Since $\llround a,b \rrround = \llround a, \infty \rrround \wedge \llround -\infty,b \rrround$, it suffices to only consider meets with $\llround a, \infty \rrround$ and $\llround -\infty,b \rrround$.
Consider the relation $\llround p,q\rrround \vee \llround p',q'\rrround = \llround p,q'\rrround$ for $p \le p' < q \le q'$.
Meeting with $\llround a, \infty \rrround$ we obtain the equality $\llround p \vee a, q\rrround \vee \llround p' \vee a,q'\rrround = \llround p \vee a,q'\rrround$.
Meet stability demands that this is also a relation in the presentation. If $a \le p'$ then $p \vee a \le p' \vee a < q \le q$ and so this is indeed an assumed relation of the same form.
If not, then $p' < a$ and it becomes $\llround a, q\rrround \vee \llround a,q'\rrround = \llround a,q'\rrround$.
Technically, this is not one of the original relations, but it is trivial in the sense that it always holds.

Indeed, even more generally, our results will still hold as long as the desired relations lie in the suplattice congruence generated from the core relations and the order on the generators, since we can add these relations in to achieve meet stability, but then omit them from the final presentation (since they follow from the others by assumption).

Meeting with $\llround -\infty, b \rrround$ works similarly and meet stability for the final relations can also be shown to hold, at least in this weaker sense.

Now consider the coequaliser
\begin{center}
\begin{tikzpicture}
 \node (A) {$\R$};
 \node [right=1.0cm of A] (B) {$\R$};
 \node [right=0.8cm of B] (C) {$\T$.};
 \draw[transform canvas={yshift=0.5ex},->] (A) to node [above] {$\id$} (B);
 \draw[transform canvas={yshift=-0.5ex},->] (A) to node [below] {$+1$} (B);
 \draw[->>] (B) to node [above] {$q$} (C);
\end{tikzpicture}
\end{center}
This is an open coequaliser, since the both of the parallel arrows are isomorphisms.
So by \cref{prop:open_quotient_endomorphism,rem:open_coequaliser_and_symmetry}, the corresponding closure operator $q^*q_!$ is given by
$\bigvee_{n \in \N } (\id_! \circ (+1)^*)^n \vee \bigvee_{n \in \N } ((+1)_! \circ \id^*)^n$, which equals $\bigvee_{n \in \Z} (+n)^*$, since the left adjoint of $(+1)^*$ is simply its inverse.
Note that $(+1)^*\llround p, q\rrround = \llround p-1, q-1\rrround$ and so in terms of generators, we have $q^*q_! \llround p, q\rrround = \bigvee_{n \in \Z} \llround p + n, q + n\rrround$.

We can now use \cref{prop:open_quotient} to obtain a presentation for $\O\T$ with generators $\llround p, q\rrround$ for $p \in \Q \sqcup \{-\infty\}$ and $q \in \Q \sqcup \{\infty\}$ and the following relations:
\begin{itemize}
 \item $\llround -\infty, \infty \rrround = 1$,
 \item $\llround p, q \rrround \wedge \llround p', q' \rrround = \bigvee_{n \in \Z} \llround p \vee (p' + n), q \wedge (q' + n)\rrround$,
 \item $\llround p,q\rrround \vee \llround p',q'\rrround = \llround p,q'\rrround$ for $p \le p' < q \le q'$,
 \item $\llround p,q\rrround = \bigvee_{p < p' < q' < q} \llround p',q'\rrround$.
\end{itemize}

This presentation for the circle is essentially the same as the one in \cite[Section 5]{gutierrezgarcia2016circle} which was obtained by ad hoc methods. In contrast, we observe that our derivation was very natural and mechanistic.
\end{example}

For our next example, we will deduce a different presentation for $\T$ from the proper quotient $[0,1] \twoheadrightarrow \T$.

\begin{example}
A standard presentation for the frame $\O [0,1]$ has generators $\llround q, 1\rrbracket$ and $\llbracket 0, q\rrround$ for each $q \in \Q \cap [0,1]$ which satisfy the relations:
\begin{itemize}
 \item $\llround p, 1\rrbracket \wedge \llbracket 0, q\rrround = 0$ for $p \ge q$,
 \item $\llround p, 1\rrbracket \vee \llbracket 0, q\rrround = 1$ for $p < q$,
 \item $\llround q, 1\rrbracket = \bigvee_{p > q} \llround p, 1\rrbracket$ and $\llbracket 0, q\rrround = \bigvee_{p < q} \llbracket 0, p\rrround$ for all $q \in \Q \cap [0,1]$.
\end{itemize}

To use our result for proper quotients this presentation must also be modified. This time we want the generators to be closed under finite joins.
We set ${\rrparenthesis p,q \llparenthesis} = \llbracket 0, p\rrround \vee \llround q, 1\rrbracket$. Intuitively, these are the complements of closed intervals.

Indeed, recall that for $\PreFrm$-type presentations the elements of the frame are best thought of as \emph{closed} sublocales under the reverse order, so we can imagine the generator ${\rrparenthesis p,q \llparenthesis}$ as corresponding to the closed interval $[p, q]$. From this perspective it is intuitive that under intersections of the corresponding closed intervals these form a very similar semilattice to the formal open intervals considered above.
Formally, we can check that these indeed form a $\vee$-semilattice with ${\rrparenthesis p,q \llparenthesis} \vee {\rrparenthesis p',q' \llparenthesis} = {\rrparenthesis p \vee p', q \wedge q' \llparenthesis}$ and ${\rrparenthesis 0, 1 \llparenthesis} = 0$.

In terms of these new generators the remaining axioms become:
\begin{itemize}
 \item ${\rrparenthesis p,q \llparenthesis} \wedge {\rrparenthesis p',q' \llparenthesis} = {\rrparenthesis p,q' \llparenthesis}$ for $p \le p' \le q \le q'$,
 \item ${\rrparenthesis p,q \llparenthesis} = 1$ for $p > q$,
 \item ${\rrparenthesis p,q \llparenthesis} = \dirsup[q' > q] {\rrparenthesis p,q' \llparenthesis}$ for $p, q < 1$ and ${\rrparenthesis p,q \llparenthesis} = \dirsup[p' < p] {\rrparenthesis p',q \llparenthesis}$ for $0 < p, q$.
\end{itemize}
Again note that these essentially satisfy the join-stability conditions for these to form a $\PreFrm$-style presentation.

We can now consider the locale coequaliser
\begin{center}
\begin{tikzpicture}
 \node (A) {$1$};
 \node [right=1.0cm of A] (B) {$[0,1]$};
 \node [right=0.8cm of B] (C) {$\T$.};
 \draw[transform canvas={yshift=0.5ex},->] (A) to node [above] {$0$} (B);
 \draw[transform canvas={yshift=-0.5ex},->] (A) to node [below] {$1$} (B);
 \draw[->>] (B) to node [above] {$q$} (C);
\end{tikzpicture}
\end{center}
The parallel arrows are closed inclusions and hence proper --- the right adjoints are given by taking joins with ${\rrparenthesis 0,0 \llparenthesis}$ and ${\rrparenthesis 1,1 \llparenthesis}$ respectively. The pullback of these parallel arrows is the empty locale and so the unique map $t\colon 0 \to 1$ trivially satisfies the assumptions of \cref{prop:proper_transitive_interior_operator,rem:proper_coequaliser_and_symmetry} and hence $q$ is a proper quotient and the corresponding interior operator $q^*q_*$ is $(1)_*(0)^* \wedge (0)_*(1)^* \wedge \id$.

In terms of generators we have that $(0)^*({\rrparenthesis p,q \llparenthesis}) = 1 \iff p > 0$ and $(1)^*({\rrparenthesis p,q \llparenthesis}) = 1 \iff {q < 1}$.
Thus, we find
\[q^*q_*\colon {\rrparenthesis p,q \llparenthesis} \mapsto
    \begin{cases}
     {\rrparenthesis p,q \llparenthesis} & \text{ if $p > 0$ and $q < 1$} \\
     {\rrparenthesis p,q \llparenthesis} \wedge {\rrparenthesis 1,1 \llparenthesis} & \text{ if $p = 0$ and $q < 1$} \\
     {\rrparenthesis p,q \llparenthesis} \wedge {\rrparenthesis 0,0 \llparenthesis} & \text{ if $p > 0$ and $q = 1$} \\
     0 & \text{ if $p = 0$ and $q = 1$} \\
    \end{cases},\]
which intuitively adds $\{0\}$ to any closed interval containing $1$ and $\{1\}$ to any closed interval containing $0$.

We are now in a position to use \cref{prop:proper_quotient} to immediately obtain a presentation for $\O \T$ with generators ${\rrparenthesis p,q \llparenthesis}$ for $p,q \in \Q \cap [0,1]$ and the relations:
\begin{enumerate}
 \item ${\rrparenthesis 0, 1 \llparenthesis} = 0$,
 \item ${\rrparenthesis p,q \llparenthesis} \vee {\rrparenthesis p',q' \llparenthesis} = {\rrparenthesis p \vee p', q \wedge q' \llparenthesis}$ for $p' > 0$ and $q' < 1$,
 \item ${\rrparenthesis p,q \llparenthesis} \vee {\rrparenthesis 0,q' \llparenthesis} = {\rrparenthesis p, q \wedge q' \llparenthesis} \wedge {\rrparenthesis 1,q \llparenthesis}$,
 \item ${\rrparenthesis p,q \llparenthesis} \vee {\rrparenthesis p',1 \llparenthesis} = {\rrparenthesis p \vee p', q \llparenthesis} \wedge {\rrparenthesis p,0 \llparenthesis}$,
 \item ${\rrparenthesis p,q \llparenthesis} \wedge {\rrparenthesis p',q' \llparenthesis} = {\rrparenthesis p,q' \llparenthesis}$ for $p \le p' \le q \le q'$,
 \item ${\rrparenthesis p,q \llparenthesis} = 1$ for $p > q$,
 \item ${\rrparenthesis p,q \llparenthesis} = \dirsup[q' > q] {\rrparenthesis p,q' \llparenthesis}$ for $p, q < 1$ and ${\rrparenthesis p,q \llparenthesis} = \dirsup[p' < p] {\rrparenthesis p',q \llparenthesis}$ for $0 < p, q$.
\end{enumerate}
This can be further simplified by noting that ${\rrparenthesis 1,q \llparenthesis} = 1$ unless $q = 1$ by relation (vi) and similarly for ${\rrparenthesis p,0 \llparenthesis}$.
Thus, we can extend relation (ii) to hold for all $p,p',q,q'$ such that $(p',q),(p,q') \ne (0,1)$.
Then the remaining cases of (iii) and (iv) can be reduced to a single rule: ${\rrparenthesis 0,q \llparenthesis} \vee {\rrparenthesis p',1 \llparenthesis} = {\rrparenthesis p', q \llparenthesis} \wedge {\rrparenthesis 0,0 \llparenthesis} \wedge {\rrparenthesis 1,1 \llparenthesis}$.
\end{example}

\bibliographystyle{abbrv}
\bibliography{references}

\end{document}